\definecolor{myblue}{RGB}{0,29,119}
\newtheorem{theorem}{Theorem}[section]
\newtheorem{proposition}[theorem]{Proposition}
\newtheorem{corollary}[theorem]{Corollary}
\newtheorem{lemma}[theorem]{Lemma}
\theoremstyle{definition}
\newtheorem{definition}[theorem]{Definition}
\newtheorem{example}[theorem]{Example}
\newtheorem{remark}[theorem]{Remark}
\newtheorem*{theorem*}{Theorem}
\newcommand{\Cc}{{\mathbb C}} % the set of complex numbers
\DeclareMathOperator{\sign}{sign}
\newcommand{\hH}{\mathfrak{h}}
\newcommand{\gG}{\mathfrak{g}}
\newcommand{\cK}{{\mathcal K}}
\newcommand{\cL}{{\mathcal L}}
\DeclareMathOperator{\coker}{coker}
\DeclareMathOperator{\im}{Im}
\providecommand{\AMS}{$\mathcal{A}$\kern-.1667em%
\lower.25em\hbox{$\mathcal{M}$}\kern-.125em$\mathcal{S}$}
\newcommand{\astfootnote}[1]{
\let\oldthefootnote=\thefootnote
\setcounter{footnote}{0}
\renewcommand{\thefootnote}{\fnsymbol{footnote}}
\footnote{#1}
\let\thefootnote=\oldthefootnote
}
\begin{document}

{\let\thefootnote\relax\footnotetext{\!\!\!\!\!\!\!\!\!\!\!\!\!\!\! MSC 2010:\,\,\,\,17B30, 17B55, 17B56, 05E15\\ Keywords: nilpotent k-ary Lie algebras, k-ary Lie algebra homology  \\
Contact: \href{mailto:emre-sen@uiowa.edu}{emre-sen@uiowa.edu} }}

% {17B30: Solvable, nilpotent (super)algebras, 17B55: Homological methods in Lie (super)algebras,17B01 Identities, free Lie (super)algebras, 17B56: Cohomology of Lie (super)algebras,05E15: Combinatorial problems concerning the classical groups}

%\doublespacing
%\pagecolor{yellow!50!orange}
\title{on the Homology of Nilpotent k-ary  Lie Algebras}
%\doublespacing
\begin{abstract} We introduce nilpotent k-ary Lie algebras including analogues of Heisenberg Lie algebras and free nilpotent Lie algebras.
We study homology of k-ary nilpotent Lie algebras by using a modification of Chevalley-Eilenberg complex. For some classes of nilpotent k-ary Lie algebras and in particular Heisenberg k-ary Lie algebras we give explicit formulas for Betti numbers. Representation stability of free nilpotent k-ary Lie algebras is proven and lower bounds for Betti numbers are described by Schur modules. We also verify that toral rank conjecture holds for the classes we studied. Moreover, for 2-step nilpotent k-ary Lie algebras, we give a refinement of it.

\end{abstract}

\author{Emre SEN}

%\address{Department of Mathematics, Northeastern University, Boston MA}

%\email{sen.e@husky.neu.edu}

\maketitle

%\tableofcontents

\section{Introduction}

k-ary Lie algebra $\mathcal{L}$ is a vector space equipped with k-bracket
\begin{center}
$[.,\ldots,.]:\cL\times\ldots\times\cL\longrightarrow \cL$
\end{center} 
satisfying:
\begin{enumerate}[label=\roman*)]
\item antisymmetry i.e. $[x_1,\ldots,x_k]=sgn(\sigma)[x_{\sigma(1)},\ldots,x_{\sigma(k)}]$ where $\sigma\in\mathfrak{S}_k$ and
\item \label{jacobi} generalized Jacobi identity:
\begin{center}
$[[x_1,x_2\ldots,x_k],x_{k+1},\ldots,x_{2k-1}]=\sum\limits^{k}_{i=1}[x_1,x_2,\ldots,x_{i-1},[x_i,x_{k+1},\ldots,x_{2k-1}],x_{i+1},\ldots,x_k]$
\end{center} 
\end{enumerate}

It appears in the works of Filippov \cite{filippov26n} and Hanlon, Wachs \cite{hanlon1995lie}.
We can define \emph{lower central series} for k-ary Lie algebras as:
\begin{center}
$\cL\supset \underbrace{\left[\cL,\ldots,\cL\right]}_{k}\supset \underbrace{\left[\underbrace{\left[\cL,\ldots,\cL\right]}_{k},\cL,\ldots,\cL\right]}_{2k-1}\supset\ldots$
\end{center}
We call k-ary Lie algebra \emph{nilpotent} if lower central series terminates. In particular if the bracket of size greater than $k$ vanishes we call them \emph{ $2$-step nilpotent k-ary Lie algebras}. In other words, $\cL$ is 2-step nilpotent k-ary Lie algebra if and only if k-commutator $[\cL,\ldots,\cL]$ is contained in the center of $\cL$.
The aim of this paper is to introduce some classes of k-ary nilpotent Lie algebras and describe their homology by using a generalization of Chevalley-Eilenberg complex. Even in the classical set up $k=2$, computation of Betti numbers is a hard task, there are only limited cases that we know them explicitly. Most of them are dedicated for 2-step nilpotent (super) Lie algebras \cite{armstrong1997explicit}, \cite{sigg1996laplacian}, (\cite{getzler1999homology}), Heisenberg Lie algebras \cite{santrab} and some free nilpotent Lie algebras of small dimensions \cite{tirao2002homology}. Also, recently, it can be viewed as a problem of representation stability in the sense of Church and Farber \cite{church2013representation}. From this perspective homology of 2-step nilpotent Lie algebras, super Lie algebras and their analogues studied by S. Steven \cite{sam2013homology}.

Thus, we focus on k-ary generalizations of the above cases i.e. k-ary Heisenberg Lie algebras, free nilpotent k-ary Lie algebras etc. aiming to carry results on the homology of nilpotent Lie algebras into homology of nilpotent k-ary Lie algebras.

A real Heisenberg Lie algebra $H$ is $2m+1$ dimensional algebra with basis \\$\{x_1,\ldots,x_m,y_1,\ldots,y_m,z\}$ satisfying $[x_i,y_j]=\delta_{i,j}z$. Its homology is given in \cite{santrab}: 
\begin{align*}
\vert H^i{(H)}\vert={2m\choose i}-{2m\choose i-2}
\end{align*}

We consider the following generalization of it: let $\hH$ be $km+1$ dimensional algebra with basis $\{x^1_{1},\ldots,x^1_{m},x^2_1,\ldots,x^2_m,\ldots,x^k_{m},z\}$ such that:
\begin{center}
$[x^1_{i},x^2_{i},\ldots, x^k_{i}]=z$
\end{center}
We prove that:
\begin{theorem}
Betti numbers are:
\begin{align}
\vert H^{ik-(i-1)}\left(\hH\right)\vert={km\choose i(k-1)+1}-{km\choose (i-1)(k-1)}
\end{align}
\end{theorem}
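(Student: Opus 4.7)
My plan is to analyze the modified Chevalley--Eilenberg complex $(\Lambda^\bullet \hH, d)$, where $d\colon \Lambda^n \hH \to \Lambda^{n-k+1} \hH$ sums over $k$-element subsets and applies the bracket; the generalized Jacobi identity ensures $d^2=0$. The first step is to exploit that $\Rr z$ is a central ideal, so $\hH = V \oplus \Rr z$ with $V = \mathrm{span}\{x^j_i\}$, yielding the direct-sum decomposition $\Lambda^n \hH = \Lambda^n V \oplus \Lambda^{n-1} V \wedge z$. Any $k$-bracket involving $z$ is zero, and any $k$-bracket of $V$-elements is a scalar multiple of $z$, which wedges trivially with the existing $z$; hence $d$ vanishes on the right summand, and on $\Lambda^n V$ it restricts to a map $d'\colon \Lambda^n V \to \Lambda^{n-k}V\wedge z \cong \Lambda^{n-k}V$ that is the main object of study.

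The second step exploits the further decomposition $V = \bigoplus_{i=1}^{m} V_i$ with $V_i = \mathrm{span}\{x^1_i,\dots,x^k_i\}$. Because a nonzero bracket forces all $k$ entries to share the same subscript and to exhaust the superscripts $\{1,\dots,k\}$, $d'$ is compatible with the tensor decomposition $\Lambda^\bullet V \cong \bigotimes_i \Lambda^\bullet V_i$, acting as a sum of commuting local operators $d' = \sum_i d'_i$. Each local complex $(\Lambda^\bullet V_i, d'_i)$ has a unique nontrivial arrow, the isomorphism $\Lambda^k V_i \xrightarrow{\sim}\Lambda^0 V_i$, so its homology is $\Lambda^j V_i$ concentrated in degrees $1\le j\le k-1$. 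K\"unneth then determines $H_\bullet(\Lambda^\bullet V, d')$ completely, with total Poincar\'e series $\bigl((1+t)^k - 1 - t^k\bigr)^m$.

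The third step recovers $H_\bullet(\hH)$ from the short exact sequence of complexes $0\to\Lambda^{\bullet-1}V\wedge z\to\Lambda^\bullet\hH\to\Lambda^\bullet V\to 0$, in which both the subcomplex and the quotient complex carry the zero differential. The resulting long exact sequence collapses to
\[
\dim H_n(\hH) \;=\; \dim\ker(d'_n) \,+\, \binom{km}{n-1} \,-\, \dim\im(d'_{n+k-1}).
\]
Substituting $n = i(k-1)+1 = ik-(i-1)$ and using $\dim\ker(d'_n) = \binom{km}{n} - \dim\im(d'_n)$, the theorem reduces to establishing the combinatorial identity
\[
\dim\im(d'_{i(k-1)+1}) \,+\, \dim\im(d'_{(i+1)(k-1)+1}) \;=\; \binom{km}{i(k-1)} \,+\, \binom{km}{(i-1)(k-1)}.
\]

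The main obstacle is precisely this last identity. Using the tensor decomposition, each rank $\dim\im(d'_n)$ is a sum of products $\prod_j \binom{k}{n_j}$ over compositions of $n$ into parts in $\{0,1,\dots,k\}$ containing at least one part equal to $k$, and the required equality amounts to a telescoping that generalizes the reflection-principle cancellation underlying Santharoubane's formula $\binom{2m}{i}-\binom{2m}{i-2}$ in the classical $k=2$ case. All remaining steps are structural.
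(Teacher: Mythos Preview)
Your structural reduction (steps one through six) is correct and, in fact, cleaner than the paper's: decomposing $\hH=V\oplus\Cc z$, identifying the induced map $d'\colon\Lambda^nV\to\Lambda^{n-k}V$, and extracting $\dim H_n(\hH)=\dim\ker(d'_n)+\binom{km}{n-1}-\dim\im(d'_{n+k-1})$ from the short exact sequence is all sound. The paper does not use this filtration; instead it computes $\dim\im\partial_{i(k-1)+1}=\binom{km}{(i-1)(k-1)}$ directly by induction on $m$, splitting off one block $\Cc^k$ at a time and invoking Vandermonde. Both routes reduce the theorem to the same rank computation, since $\im\partial_n=\im(d'_n)\wedge z$.

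The gap is in your final step. Your asserted formula
\[
\dim\im(d'_n)\;=\;\sum_{\substack{(n_1,\ldots,n_m)\\ \sum n_j=n,\ \exists j:\,n_j=k}}\ \prod_{j}\binom{k}{n_j}
\]
is false. Although $d'$ is injective on each multidegree summand $\bigotimes_j\Lambda^{n_j}V_j$ having some $n_j=k$, the images of \emph{different} summands overlap in the target: for instance with $m=2$, both $(k,0)$ and $(0,k)$ map onto the one--dimensional piece $(0,0)$, so the combined image has dimension $1$, not $2$. Your sum counts source dimensions, hence systematically overcounts the rank, and the ``telescoping reflection'' you allude to would be built on this incorrect expression. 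A quick sanity check: for $k=2$, $m=2$, $n=2$ your formula gives $2$, while the actual rank is $\binom{4}{0}=1$.

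There are two honest ways to close the gap. Either follow the paper and prove $\dim\im(d'_{i(k-1)+1})=\binom{km}{(i-1)(k-1)}$ by induction on $m$; or, staying within your framework, use the K\"unneth computation you already have---namely $h_n:=\dim H_n(\Lambda^\bullet V,d')=[t^n]\bigl((1+t)^k-1-t^k\bigr)^m$---together with the exactness relation $\binom{km}{n}=h_n+\dim\im(d'_n)+\dim\im(d'_{n+k})$ to solve for the ranks via the alternating sum $\dim\im(d'_n)=\sum_{j\ge 0}(-1)^j\bigl[\binom{km}{n+jk}-h_{n+jk}\bigr]$, and then verify the resulting identity. Equivalently, it suffices to show that $d'_{i(k-1)+1}$ is \emph{surjective} onto $\Lambda^{(i-1)(k-1)}V$ for $i$ in the stated range; this is the real content of the rank formula and is what the paper's induction establishes.
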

%\begin{corollary} If we set $k=2$, this recovers result \ref{} of \cite{santrab}.
%\end{corollary}

Another class of 2-step nilpotent Lie algebras was introduced by Armstrong, Cairns and Jessup in \cite{armstrong1997explicit}. We define k-ary analogue of it (see definition \ref{defACJ}), and study their homology in section \ref{homACJ}.\\

The following is a generalization of free Lie algebra for k-ary Lie algebras:
\begin{definition}\cite{friedmann2017generalization} The free k-ary Lie algebra on the set $X$ is a k-ary Lie algebra $\cL$ together with mapping $e:X\mapsto \cL$ with the following universal property: for each k-ary Lie algebra $\cK$ and for each mapping $f:X\mapsto \cK$ there is a unique k-ary Lie algebra homomorphism $F:\cL\mapsto\cK$ such that $f=F\circ e$
\end{definition}

The reason why we consider free k-ary Lie algebras is due to the well-known construction. Let $V$ be a vector space, $T(V)$ be its  tensor algebra. Let $F(V)=\{v\otimes u-u\otimes v\vert u,v\in T(V)\}$. $F(V)$ is free Lie algebra and it has a natural grading with respect to degree. Graded piece $F(V)_d$ of degree $d$ becomes both representation of symmetric group $\mathfrak{S}_d$ and $GL(V)$ which creates a rich combinatorics.

 The question of which irreducible representations appear in $F(V)_d$ solved by Klyachko \cite{klyachko1974lie}. %and multiplicities were computed by Kraskiewicz-Weyman \cite{kraskiewicz2001algebra}.  %It is a difficult problem in general and it involves so much combinatorics. They prove that the dimension of representation is given by catalan numbers.
 The first few summands of $F(V)$ are:
\begin{gather}\label{freelie}
F(V)=V\oplus S_{11}V \oplus S_{21}V\oplus \left(S_{31}V\oplus S_{21^2}V\right)\oplus \cdots %\left(S_{32}V\oplus S_{41}V \oplus S_{21^3}V \oplus S_{31^2} V\oplus S_{2^21}V\right)\cdots\nonumber
\end{gather}
where $S_{\lambda}V$ is representation parametrized by Young diagram $\lambda$. 
Free nilpotent Lie algebras are quotients of $F(V)$ i.e. taking quotient by elements with degrees greater then $d$ gives d-step nilpotent free Lie algebra. For example, 2-step free nilpotent Lie algebra is
\begin{align}
\mathfrak{n}_2=V\oplus \bigwedge^2 V,\hspace{1cm} [x,y]=x\wedge y
\end{align}
%and 3-step free nilpotent Lie algebra is 
%\begin{align}
%\mathfrak{n}_3=V\oplus \bigwedge^2 V \oplus S_{21}V,\hspace{1cm} %[x,y\wedge z]=\ref{}
%\end{align} etc. 
As an algebra, their structure is relatively simple but homologies are complicated, only 2-step case $\mathfrak{n}_2$ was solved, see \cite{jozefiak1988representation}, \cite{sigg1996laplacian}, \cite{sam2013homology}. 

%Also we can consider more relations between generators of the algebra to obtain nilpotent lie algebras. We recall the important result of Kostant which relates cohomology of nilpotent lie algebras to ... \cite{kostant1963lie}.\\
% However computation of these homologies is a hard task, there are only few cases that we know betti numbers explicitly. Most of them dedicated for two step nilpotent lie algebras \cite{armstrong1997explicit}, \cite{sigg1996laplacian}, Heisenberg Lie algebras \cite{santrab} and some special cases as $n=5$ but $\dim V=2$ \cite{tirao2002homology}. Also, recently, it can be viewed as a problem of representation stability in the sense of Church and Farber \cite{church2013representation}. From this perspective homology of two step lie algebras, super lie algebras and their analogues studied by S. Steven \cite{sam2013homology}.
%Hanlon and Wachs introduced Lanke in \cite{hanlon1995lie} and consider a modification of Chevalley-Eilenberg complex for Lankes. 

The decomposition of free k-ary Lie algebras into irreducible components is still an open problem, there are particular results    \cite{friedmann2017generalization}. For example, the second and the third graded components are parametrized by diagrams $\lambda=(1^k)$ and $\lambda=(2^{k-1},1)$ respectively.
We study homologies of free 2-step nilpotent k-ary Lie algebras:
\begin{align}
\mathfrak{n}^k_2=V\oplus\bigwedge^kV,\hspace{1cm} [x_1,x_2,\ldots,x_k]=x_1\wedge x_2\wedge\cdots\wedge x_k
\end{align}

We prove that each homology is a Schur module. However, we do not have general description of homologies, because to find irreducible summands of the complex $\left(\bigwedge\mathfrak{n}^k_2\right)$, the plethsym problem i.e. decomposition of $\bigwedge^j(\bigwedge^kV)$, $j,k\geq 3$ has to be solved. Nevertheless, we study lower bounds of Betti numbers in section \ref{sectionFree}.

For small dimensional case $\dim V=k$ we compute the homology of 3-step nilpotent k-ary Lie algebras. In section \ref{homfree3}, we show:
\begin{theorem} The ranks of homologies of free 3-step nilpotent k-ary Lie algebra $\mathfrak{n}^k_3$ of dimension $k+1$ are:
\begin{gather*}
\vert H^1(\mathfrak{n}^k_3)\vert=k\\
\vert H^k(\mathfrak{n}^k_3)\vert={2k+1\choose k}-\left(3k+2\right)\\
\vert H^{2k-1}(\mathfrak{n}^k_3)\vert=\left(2k+1\right)\left(k-1\right)
\end{gather*}

\end{theorem}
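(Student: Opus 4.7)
I would begin by identifying $\mathfrak{n}_3^k$ explicitly. Fix a basis $e_1,\ldots,e_k$ of $V$ and put $\omega:=e_1\wedge\cdots\wedge e_k$; then $Z:=\bigwedge^k V=\Cc\omega$ is one-dimensional, and by \cite{friedmann2017generalization} the third graded component of the free $k$-ary Lie algebra on $V$ is isomorphic to the Schur module $S_{(2^{k-1},1)}V$, which has dimension $k$ when $\dim V=k$. Concretely, a basis of this third component $W$ is given by $w_p:=[\omega,e_1,\ldots,\widehat{e_p},\ldots,e_k]$ for $p=1,\ldots,k$, and linear independence follows from a direct check that the generalized Jacobi identity produces only trivial relations on these $k$ elements. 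Thus $\mathfrak{n}_3^k=V\oplus Z\oplus W$ has dimension $2k+1$ (the $k+1$ in the statement refers to the two-step quotient $V\oplus Z$).

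I then apply the modified Chevalley-Eilenberg complex for $k$-ary Lie algebras whose differential $d$ raises exterior degree by $k-1$; cohomology is thus concentrated in degrees $1+i(k-1)$, giving the subcomplex
\begin{equation*}
C^1\xrightarrow{d_1}C^k\xrightarrow{d_k}C^{2k-1}\xrightarrow{d_{2k-1}}C^{3k-2}\to\cdots,
\end{equation*}
with $C^p=\bigwedge^p(\mathfrak{n}_3^k)^*$. The internal weight grading inherited from $\mathfrak{n}_3^k=V\oplus Z\oplus W$ (with weights $1$, $k$, $2k-1$ respectively) is preserved by $d$, allowing each rank computation to be analyzed weight-by-weight.

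The three Betti numbers then follow from three rank calculations. Since $\mathfrak{n}_3^k/[\mathfrak{n}_3^k,\ldots,\mathfrak{n}_3^k]=V$, one has $H^1=V^*$, so $|H^1|=k$. The differential $d_1$ is dual to the bracket $\bigwedge^k\mathfrak{n}_3^k\to\mathfrak{n}_3^k$, whose image is precisely $Z\oplus W$, yielding $\mathrm{rank}(d_1)=k+1$. The top differential $d_{2k-1}$ vanishes: for $k\geq 4$ this is automatic since $3k-2>2k+1$ forces $C^{3k-2}=0$; for $k=3$ a direct check on the top form $e_1\wedge e_2\wedge e_3\wedge\omega\wedge w_1\wedge w_2\wedge w_3$ shows that each surviving term produced by $\partial$ repeats a factor and therefore vanishes. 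Granting the key identity $\mathrm{rank}(d_k)=2k+1$, we obtain
\begin{equation*}
|H^k|=\binom{2k+1}{k}-(2k+1)-(k+1)=\binom{2k+1}{k}-(3k+2),
\end{equation*}
\begin{equation*}
|H^{2k-1}|=\binom{2k+1}{2}-(2k+1)=(k-1)(2k+1),
\end{equation*}
matching the claim.

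The main obstacle is therefore the identity $\mathrm{rank}(d_k)=2k+1$. I would approach this dually, analyzing $\partial:\bigwedge^{2k-1}\mathfrak{n}_3^k\to\bigwedge^k\mathfrak{n}_3^k$. Because only the two brackets $V^{\wedge k}\to Z$ (producing $\omega$) and $V^{\wedge(k-1)}\wedge Z\to W$ (producing some $w_p$) are non-zero, evaluating $\partial$ on a pure wedge reduces to a short, explicit sum of shuffles. The combinatorial heart of the proof is to verify that the resulting images span an exactly $(2k+1)$-dimensional subspace of $\bigwedge^k\mathfrak{n}_3^k$; I expect this to follow by exhibiting an explicit complementary $(2k+1)$-dimensional subspace of $\bigwedge^{2k-1}\mathfrak{n}_3^k$ on which $\partial$ restricts to an isomorphism onto its image, with the three natural blocks (contributions of weight $k$, $2k-1$, and $3k-2$) accounting respectively for the copies of $Z$, $V$, and $W$ inside the image.
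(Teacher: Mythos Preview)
Your approach is essentially identical to the paper's: both set up the Hall basis $V\oplus\Cc\omega\oplus W$ of dimension $2k+1$, compute $\mathrm{rank}(\partial_k)=k+1$, observe that $\partial_{3k-2}=0$ (automatically for $k\geq 4$, by direct check on the top form for $k=3$), and reduce everything to the identity $\mathrm{rank}(\partial_{2k-1})=2k+1$. You leave this last identity as a sketch; the paper fills it in by exactly the weight-by-weight case analysis you anticipate, splitting elements of $\bigwedge^{2k-1}\gG$ according to whether they contain all $k$ of the $e_i$ or only $k-1$ of them, and whether or not they contain $\omega$, obtaining image contributions of sizes $k$, $k$, $0$, and $1$ in the four cases.
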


Beyond the explicit computation of Betti numbers, there are questions concerning the total homology of Lie algebras. For instance, the toral rank conjecture states that for any nilpotent Lie algebra $\mathfrak{n}$, the inequality
 \begin{align}
\sum\limits^{\dim\mathfrak{n}}_{i=0} H^i\left(\mathfrak{n}\right)\geq 2^{\dim\mathfrak{z}}
\end{align} 
is valid where $\mathfrak{z}$ is the center of $\mathfrak{n}$. We verify that this holds for the algebras (definitions  \ref{defheis},\ref{defACJ},\ref{deffree2}, \ref{deffree3}) which are subject to this work In particular, for 2-step nilpotent k-ary Lie algebras, we improve the lower bound of total homology in theorem \ref{thmRefinement} by generalizing arguments of \cite{tirao2000refinement}.

Another open question about Lie algebra homology is the property M of Hanlon \cite{hanlon}, which relates total homology of current Lie algebra to homology of Lie algebra itself :

\begin{align}\label{defPropertyM}
H^*\left(\gG\otimes \faktor{\Cc[t]}{t^j}\right)=H^*\left(\gG\right)^{\otimes j}
\end{align}

For example, semisimple Lie algebras satisfy it. However, there are counterexamples to this conjecture, since then the purpose is to find algebras satisfying \ref{defPropertyM} above. Even the case of Heisenberg Lie algebras (k=2) of dimension greater or equal than $5$ remains unknown \cite{hanlon2002property}. We show that for k-ary Heisenberg Lie algebras with $k\geq 5$ Hanlon's conjecture \ref{defPropertyM} is not true.

Organization of the paper is : first we give definitions of some classes of k-ary Lie algebras and compare them with the classical (k=2) case in Section \ref{sectionExamples}. In the \ref{defResolution}rd section  we study their homologies. For k-ary Heisenberg Lie algebras, we give explicit Betti numbers. In section \ref{sectionFree}, we obtain lower bounds of Betti numbers and investigate which Schur modules can appear for some particular cases of free 2-step nilpotent k-ary Lie algebras. We discuss representation stability of some cases in \ref{sectionRep}. The last section \ref{sectionToral} is about toral rank conjecture  of nilpotent k-ary Lie algebras.

\section{Examples of Nilpotent k-ary Lie Algebras}\label{sectionExamples}
Here we present some examples of nilpotent k-ary Lie algebras:

\subsection{Generalization of Heisenberg Lie algebra}
The algebra H with basis $\{x,y,z\}$ and relation $[x,y]=z$ is called Heisenberg Lie algebra which is a central subject in many fields. We consider the following generalization of it:
\begin{definition}\label{defheis} A $km+1$ dimensional algebra $\hH$ with basis 
\begin{center}
$\{x^1_{1},\ldots,x^1_{m},x^2_1,\ldots,x^2_m,\ldots,x^k_{m},z\}$
\end{center}
 and with the only nonzero k-bracket (up to permutation):
\begin{center}
$[x^1_{i},x^2_{i},\ldots x^k_{i}]=z$
\end{center}
is called \emph{k-ary Heisenberg Lie algebra}.
\end{definition}
It is clear that $[\hH,\ldots,\hH]=z$ which implies that $\hH$ is 2-step nilpotent k-ary Lie algebra.
\subsection{A class of nilpotent-ACJ}\label{defACJ} The following is generalization of the algebra studied in \cite{armstrong1997explicit}. Let $\gG$ be Lie algebra with basis $\{z,x^1_1,\ldots,x^1_m,x^2_1,\ldots x^2_m,\ldots,x^{k}_1,\ldots, x^k_m\}$ and with the bracket:
\begin{align}
[z,x^1_i,x^2_i,\ldots,x^{k-1}_i]=x^k_i
\end{align}
Center of the algebra is $\left\{x^k_1,\ldots,x^k_{m}\right\}$ which is equal to k-commutator of the algebra.

\subsection{Free 2-step nilpotent k-ary Lie algebras}\label{deffree2}
Let $V$ be vector space over $\Cc$ with $\dim V\geq k$. We consider $\mathfrak{g}=V\oplus\bigwedge^kV$ with respect to bracket:

\begin{center}
$\underbrace{\gG\times\cdots\times\gG}_{k-many}\mapsto \gG$\\
$[v_1,\ldots,v_k]=\begin{cases} 
v_1\wedge\ldots\wedge v_k &\text{each}\quad v_i\in V \\
\quad\quad 0 &\text{otherwise}
\end{cases}$
\end{center}

Since the bracket is antisymmetric and satisfies generalized Jacobi identity identically, $\gG$ is free k-ary Lie algebra. Moreover, it is nilpotent, since $[\gG,\ldots,\gG]=\bigwedge^kV$. 

\subsection{Free 3-step nilpotent Lie algebras}\label{deffree3}
To illustrate the main idea here, we choose $k=3$. Let $X$ be set, $X=\left\{x,y,z\right\}$. We use the Hall Basis $\left\{x,y,z,[xyz],[xy[xyz]],[yz[xyz]],[xz[xyz]]\right\}$ for the free 3-ary Lie algebra $\cL$ on the set $X$, where $[,,]$ is 3-bracket. It is nilpotent since the term $\left[\left[\left[\left[\cL,\cL,\cL\right],\cL,\cL\right],\cL,\cL\right],\cL,\cL\right]$ of lower central series vanishes.

\section{Chevalley-Eilenberg Complex and Homology}\label{defResolution}
Let $\gG$ be k-ary Lie algebra over $\Cc$. We use the below chain complex which mimics Chevalley-Eilenberg complex:

\begin{gather}
\begin{aligned}
\xymatrixcolsep{5pt}
\xymatrix{& \ar[rr]&&\bigwedge^{3k-2}\gG\ar[rrrr]^{\partial_{3k-2}}  &&&& \bigwedge^{2k-1}\gG\ar[rrrr]^{\partial_{2k-1}} &&&& \bigwedge^{k}\gG\ar[rrrr]^{\partial_{k}}  &&&&  \gG\ar[rr] &&\Cc
}
\end{aligned}
\label{defcomplex}
\end{gather}
with the differential 

\begin{align}\label{defdiff}
\partial_t(x_1\wedge\ldots\wedge x_t)=\sum\limits_{\sigma\in D_{t,k}}sgn(\sigma)[x_{\sigma(1)},\ldots,x_{\sigma(k)}]\wedge x_{\sigma(k+1)}\wedge \cdots \wedge x_{\sigma(t)}
\end{align}

where
\begin{align}
D_{t,k}=\left\{\sigma\in\mathfrak{S}_t\,\,\vert\,\, \sigma(1)<\sigma(2)<\cdots<\sigma(k)\quad\text{and}\quad\sigma(k+1)<\cdots<\sigma(t)\right\}
\end{align}

This complex appears in \cite{hanlon1995lie}, and they compute homology of free k-ary Lie algebra $F(V)$. We just refer to their result:
\begin{proposition} $\partial^2=0$
\end{proposition}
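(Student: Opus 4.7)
The plan is to expand $\partial_{t-k+1}\circ\partial_t$ applied to $x_1\wedge\cdots\wedge x_t$ as a double sum over shuffle permutations, partition the resulting terms into two classes according to the relative position of the two $k$-brackets that are formed, and verify that each class sums to zero — one by anticommutativity of the wedge product, the other by the generalized Jacobi identity.

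First I would write
\begin{align*}
(\partial_{t-k+1}\circ\partial_t)(x_1\wedge\cdots\wedge x_t) = \sum_{\sigma\in D_{t,k}}\sum_{\tau\in D_{t-k+1,k}} \operatorname{sgn}(\sigma)\operatorname{sgn}(\tau)\,T_{\sigma,\tau},
\end{align*}
where $T_{\sigma,\tau}$ is the corresponding term obtained by bracketing with $\sigma$ and then with $\tau$. I would classify the pairs $(\sigma,\tau)$ by whether $\tau(1)=1$ (\emph{nested}, Type~A, where the inner $k$-bracket sits as the first argument of the outer $k$-bracket), or $\tau(1)\geq 2$ (\emph{disjoint}, Type~B, where the inner bracket survives as a wedge factor and the outer bracket is formed from $k$ of the remaining original $x_j$'s).

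For Type~B: each ordered pair of disjoint $k$-subsets $(I_1,I_2)\subset\{1,\ldots,t\}$ with complement $L$ gives rise to a term proportional to $[x_{I_2}]\wedge[x_{I_1}]\wedge x_L$, where $x_L$ is written in increasing index order. The same ordered pair arises once via $(I_1,I_2)$ (pick $I_1$ in $\partial_t$, then $I_2$ in $\partial_{t-k+1}$) and once via $(I_2,I_1)$ (opposite order), producing $[x_{I_2}]\wedge[x_{I_1}]\wedge x_L$ and $[x_{I_1}]\wedge[x_{I_2}]\wedge x_L$ respectively. A direct shuffle-sign comparison shows the two attached coefficients are equal; anticommutativity $[x_{I_2}]\wedge[x_{I_1}]=-[x_{I_1}]\wedge[x_{I_2}]$ then yields cancellation in pairs.

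For Type~A: fix a $(2k-1)$-subset $S=\{s_1<\cdots<s_{2k-1}\}$ of $\{1,\ldots,t\}$ with complement $L$, and collect all Type~A terms with bracket-support $S$ and wedge tail $x_L$. Each such term is determined by the choice of a $k$-subset $I\subset S$ forming the inner bracket, the remaining $k-1$ indices of $S$ becoming the outer bracket's other arguments. Using antisymmetry of the $k$-bracket to move the inner bracket into a standard position, the accumulated signs reassemble exactly into
\begin{align*}
[[x_{s_1},\ldots,x_{s_k}],x_{s_{k+1}},\ldots,x_{s_{2k-1}}] - \sum_{i=1}^{k}[x_{s_1},\ldots,[x_{s_i},x_{s_{k+1}},\ldots,x_{s_{2k-1}}],\ldots,x_{s_k}],
\end{align*}
wedged with $x_L$ and multiplied by the common shuffle sign of $(S,L)$. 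This vanishes by the generalized Jacobi identity~\ref{jacobi}.

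The main obstacle is sign bookkeeping. The two delicate calculations are (i) verifying that the two configurations in each Type~B pair contribute the \emph{same} shuffle sign (so their sum is truly zero after the wedge swap), and (ii) verifying that when the inner bracket in a Type~A term lies at an interior position of the outer bracket, the product of shuffle signs from $\sigma$ and $\tau$, combined with the sign needed to pull the inner bracket into the $i$-th slot, reproduces precisely the alternating sign pattern on the right-hand side of the generalized Jacobi identity. Once these sign identities are checked, both cancellations follow formally.
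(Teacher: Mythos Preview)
Your overall strategy---split $\partial^2$ into ``disjoint'' (Type~B) and ``nested'' (Type~A) contributions, cancel Type~B by the antisymmetry of $\wedge$, and cancel Type~A via the generalized Jacobi identity---is exactly the standard approach, and it is what Hanlon--Wachs carry out in the reference the paper cites (the paper itself gives no argument beyond that citation).

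There is, however, a genuine gap in your Type~A step. For a fixed $(2k-1)$-subset $S\subset\{1,\dots,t\}$, the Type~A contributions are indexed by \emph{all} $k$-subsets $I\subset S$ playing the role of the inner bracket, so there are $\binom{2k-1}{k}$ of them. Yet your displayed expression
\[
[[x_{s_1},\ldots,x_{s_k}],x_{s_{k+1}},\ldots,x_{s_{2k-1}}] - \sum_{i=1}^{k}[x_{s_1},\ldots,[x_{s_i},x_{s_{k+1}},\ldots,x_{s_{2k-1}}],\ldots,x_{s_k}]
\]
contains only $k+1$ terms: it is the Filippov fundamental identity as stated in item~\ref{jacobi}. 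For $k=2$ the counts agree ($\binom{3}{2}=3=k+1$), so the classical proof goes through; for $k\ge 3$ they do not (already $\binom{5}{3}=10\neq 4$). The $\binom{2k-1}{k}$-term alternating sum that actually appears is precisely the Hanlon--Wachs $(k,k-1)$-shuffle Jacobi identity, and it is \emph{that} identity---not the Filippov form---which makes the Type~A cancellation work. You must either invoke the shuffle form directly, or explain how repeated use of the Filippov identity collapses the full $\binom{2k-1}{k}$-term sum to zero; the latter is not automatic, and the two axioms are in fact inequivalent for $k\ge 3$. Your Type~B outline is fine, but the Type~A claim as written fails for $k\ge 3$.
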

\begin{proof}
See \cite{hanlon1995lie} theorem 1.6.
\end{proof}

Therefore, we can use chain complex \ref{defcomplex}, as generalization of Chevalley-Eilenberg complex, hence we define homology with trivial coefficients according to \ref{defdiff} i.e. 
\begin{align}\label{defhomology}
H^{ik-i+1}(\gG)={\Large \cfrac{\ker\partial_{ik-i+1}}{\im\partial_{(i+1)k-i}}}
\end{align}

\subsection{Homology of k-ary Heisenberg Lie Algebras}\label{homHeis}

We prove the following:

\begin{theorem}\label{thmHeis} Betti numbers of k-ary Heisenberg Lie algebra $\hH$ are:

\begin{align}
\left|H^{ik-(i-1)}\left(\hH\right)\right|={km\choose i(k-1)+1}-{km\choose (i-1)(k-1)}
\end{align}
\end{theorem}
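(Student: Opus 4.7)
The plan is to exploit the very simple bracket structure: $z$ is central and the only nonzero $k$-bracket is the ``full-column'' bracket $[x^1_l,\ldots,x^k_l]=z$. I would first split $\hH = V \oplus \Cc z$ with $V = W_1 \oplus \cdots \oplus W_m$ and $W_l=\operatorname{span}\{x^1_l,\ldots,x^k_l\}$, and correspondingly decompose $\bigwedge^{p_i}\hH = A_{p_i}\oplus B_{p_i}$ at each degree, where $A_t=\bigwedge^t V$, $B_t = (\bigwedge^{t-1}V)\wedge z$, and $p_i = i(k-1)+1$. Because $z$ sits in no nonzero bracket we have $\partial|_{B_{p_i}}=0$, and because every nonzero bracket equals $z$ we have $\partial(A_{p_i})\subseteq B_{p_{i-1}}$. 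Under the identification $B_{p_{i-1}}\cong A_{p_{i-1}-1}$ via $\omega\wedge z\leftrightarrow \omega$, the restricted differential becomes the column-contraction operator
$$d \;=\; \sum_{l=1}^{m}\iota_{c_l^*}\ \colon\ \bigwedge^{p_i} V \longrightarrow \bigwedge^{p_i-k} V,$$
where $c_l = x^1_l\wedge\cdots\wedge x^k_l \in \bigwedge^k W_l$ and $\iota$ denotes interior product.

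The decomposition gives immediately
$$H^{p_i}(\hH)\;\cong\;\ker\!\bigl(d\colon A_{p_i}\to A_{p_i-k}\bigr)\ \oplus\ \coker\!\bigl(d\colon A_{p_{i+1}}\to A_{p_i-1}\bigr),$$
so the problem reduces to computing the ranks of $d$ on $\bigwedge V$. The key structural point is that $(\bigwedge V,d)$ factors as a tensor product of small column complexes $(\bigwedge W_l,\delta_l)$, where $\delta_l$ is the identification $\bigwedge^k W_l\xrightarrow{\sim}\Cc$, $c_l\mapsto 1$, extended by zero elsewhere. Each factor has homology $\bigoplus_{j=1}^{k-1}\bigwedge^j W_l$, so a Künneth-type argument (with careful bookkeeping of the Koszul signs for the degree $-k$ differentials) delivers the ranks of all the differentials $d_t\colon \bigwedge^t V \to \bigwedge^{t-k}V$ in closed form in terms of binomial coefficients.

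To finish, I would check that in the relevant range both contraction maps above are of maximal rank: the first via the explicit pre-image construction $v\mapsto c_l\wedge v$ whenever $v$ avoids some column $W_l$, and the second by an analogous argument. Granting these surjectivities the kernel contributes $\binom{km}{p_i}-\binom{km}{p_i-k}$ and the cokernel vanishes, yielding the stated formula $\binom{km}{i(k-1)+1}-\binom{km}{(i-1)(k-1)}$. The main obstacle is establishing surjectivity when a target wedge $v$ touches every one of the $m$ columns: the naive ``cap with $c_l$'' pre-image vanishes by antisymmetry, so one must produce $v$ as a linear combination of images of wedges with two or more complete columns, and verify using the Koszul signs from the Künneth factorisation that no unwanted cokernel survives.
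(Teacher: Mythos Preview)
Your reduction is clean and correct: splitting $\hH=V\oplus\Cc z$ with $A_t=\bigwedge^t V$ and $B_t=(\bigwedge^{t-1}V)\wedge z$, observing $\partial|_B=0$ and $\partial(A)\subseteq B$, does give
\[
H^{p_i}(\hH)\;\cong\;\ker\bigl(d\colon A_{p_i}\to A_{p_i-k}\bigr)\ \oplus\ \coker\bigl(d\colon A_{p_{i+1}}\to A_{p_i-1}\bigr),
\]
and the theorem becomes exactly the statement that $d_t\colon\bigwedge^tV\to\bigwedge^{t-k}V$ is surjective in the relevant range. This is equivalent to the paper's Proposition~\ref{thmHeisImage}, which asserts $\dim\im\partial_{p_i}=\binom{km}{(i-1)(k-1)}=\dim B_{p_{i-1}}$.

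The gap is in how you propose to compute these ranks. The pair $(\bigwedge V,d)$ is \emph{not} a chain complex, so no K\"unneth formula is available. Already for $k=2$, $m=2$ one has $d(x_1\wedge y_1\wedge x_2\wedge y_2)=x_1\wedge y_1+x_2\wedge y_2$ and hence $d^2(x_1\wedge y_1\wedge x_2\wedge y_2)=2\neq 0$; in general the contractions $\iota_{c_l^*}$ are degree~$-k$ operators that \emph{commute} when $k$ is even, so $d^2=\sum_{l\neq l'}\iota_{c_l^*}\iota_{c_{l'}^*}$ has no reason to vanish. Each individual column factor has $\delta_l^2=0$ for trivial degree reasons, but their tensor-sum does not square to zero, and this is precisely what the ``careful bookkeeping of Koszul signs'' cannot repair. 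Your fallback, the explicit pre-image construction, is left open exactly at the case you flag (targets meeting every column), and that case carries all the content. The paper takes a different route and avoids both problems: it proves the surjectivity of $d_t$ by induction on $m$, decomposing $\bigwedge^{t}\hH'=\bigoplus_{j=0}^{k}\bigwedge^{t-j}\hH\otimes\bigwedge^{j}\Cc^{k}$ for the added column $\Cc^k$, and summing the image dimensions via Vandermonde's identity $\sum_{j}\binom{km}{t-k-j}\binom{k}{j}=\binom{k(m+1)}{t-k}$. If you want to salvage your framework, the cleanest fix is to dualize---$d$ is adjoint to wedging with $c=\sum_l c_l\in\bigwedge^kV$, so surjectivity of $d_t$ is injectivity of $c\wedge-\colon\bigwedge^{t-k}V\to\bigwedge^tV$---and then run the same one-column-at-a-time induction on that side; the K\"unneth shortcut, however, is not available.
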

\begin{corollary}  In particular, when $k=2$ this recovers the well known result of \cite{santrab}.
\end{corollary}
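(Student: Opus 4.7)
The plan is to verify the corollary as a direct specialization plus index matching: first check that Definition \ref{defheis} reduces to the classical Heisenberg algebra when $k=2$, then substitute $k=2$ into the formula of Theorem \ref{thmHeis} and reindex.

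First, setting $k=2$ in Definition \ref{defheis}, the algebra $\hH$ has basis $\{x^1_1,\ldots,x^1_m,x^2_1,\ldots,x^2_m,z\}$ with the only nonzero brackets (up to permutation) being $[x^1_i,x^2_i]=z$. Renaming $x_i:=x^1_i$ and $y_i:=x^2_i$, this is precisely the presentation of the classical $(2m+1)$-dimensional Heisenberg Lie algebra $H$ with $[x_i,y_j]=\delta_{i,j}z$ appearing in the introduction and in \cite{santrab}. In particular, the chain complex \ref{defcomplex} for $k=2$ reduces to the ordinary Chevalley--Eilenberg complex, so the homology $H^{ik-(i-1)}(\hH)$ defined in \ref{defhomology} agrees with the classical Lie algebra homology $H^\bullet(H)$.

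Next, I would plug $k=2$ into Theorem \ref{thmHeis}. The homological degree becomes
\begin{align*}
ik-(i-1) = 2i - i + 1 = i+1,
\end{align*}
and the binomial coefficients become
\begin{align*}
\binom{km}{i(k-1)+1} = \binom{2m}{i+1}, \qquad \binom{km}{(i-1)(k-1)} = \binom{2m}{i-1},
\end{align*}
so the formula reads $|H^{i+1}(\hH)| = \binom{2m}{i+1} - \binom{2m}{i-1}$. Reindexing with $j := i+1$ yields
\begin{align*}
|H^j(\hH)| = \binom{2m}{j} - \binom{2m}{j-2},
\end{align*}
which is exactly the formula of \cite{santrab} quoted in the introduction. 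Since all the manipulations are routine substitutions and there is no technical obstacle, the corollary follows immediately once the identifications of the algebra and of the complex with their $k=2$ counterparts are in place.
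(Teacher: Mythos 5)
Your proposal is correct and is essentially the same (and really the only) argument: the paper treats the corollary as an immediate specialization of Theorem \ref{thmHeis}, and your substitution $k=2$, the identification of Definition \ref{defheis} with the classical Heisenberg algebra, and the reindexing $j=i+1$ giving $\binom{2m}{j}-\binom{2m}{j-2}$ are exactly what is needed.
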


Since $\left|H^{ik-(i-1)}\left(\hH\right)\right|=\left|\ker\partial_{ik-(i-1)}\right|-\left|\im\partial_{(i+1)k-i}\right|$ and $\dim\bigwedge^{ik-(i-1)}\left(\hH\right)=\left|\ker\partial_{ik-(i-1)}\right|+\left|\im\partial_{ik-(i-1)}\right|$ it suffices to compute the dimension of the images of the boundary maps. Answer is:
\begin{proposition}\label{thmHeisImage} The dimension of the image of $\partial_{ik-(i-1)}$ in $\bigwedge^{(i-1)k-(i-2)}\hH$ is 
\begin{align}\label{heisenbergformula}
{km\choose (i-1)(k-1)}
\end{align}
\end{proposition}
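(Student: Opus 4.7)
My plan is to turn the image computation into a rank computation for an explicit linear map on exterior powers of a simpler space, and then compute that rank by dualization. First, write $\hH = W\oplus \Cc z$, where $W=\operatorname{span}\{x_j^\ell : 1\leq j\leq m,\ 1\leq \ell\leq k\}$ is the $km$-dimensional subspace, so that
\[
\bigwedge^{t}\hH \;=\; \bigwedge^{t}W \;\oplus\; \bigl(\bigwedge^{t-1}W\bigr)\wedge z.
\]
Because the only non-trivial $k$-bracket produces $z$, and every $k$-bracket involving $z$ vanishes, a direct inspection of \eqref{defdiff} shows that $\partial_t(\omega\wedge z)=0$ for $\omega\in\bigwedge^{t-1}W$, while for $\omega\in\bigwedge^{t}W$ the image $\partial_t(\omega)$ lies in $\bigl(\bigwedge^{t-k}W\bigr)\wedge z$. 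Hence $\partial_t$ factors through a map $d_t:\bigwedge^{t}W\to\bigwedge^{t-k}W$ which, on a pure wedge, replaces each ``vertical'' $k$-subset $\{x_j^1,\ldots,x_j^k\}$ by the scalar $1$ (with the sign coming from the shuffle), and $\dim\operatorname{Im}\partial_{ik-(i-1)} = \operatorname{rank} d_{i(k-1)+1}$.

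Second, I would split $W=V_1\oplus\cdots\oplus V_m$ with $V_j=\operatorname{span}(x_j^1,\ldots,x_j^k)$, so that $\bigwedge^{\bullet}W = \bigotimes_{j=1}^{m}\bigwedge^{\bullet}V_j$, and the map $d_t$ becomes $\sum_{j=1}^m e_j$ (with appropriate signs), where $e_j$ acts on the $j$-th tensor factor as the canonical isomorphism $\bigwedge^{k}V_j\cong \bigwedge^{0}V_j=\Cc$ and is zero in all other degrees. Under the standard pairing of exterior algebras, the transpose of $d_t$ is wedge multiplication by
\[
\Omega \;=\; \sum_{j=1}^{m} x_j^{1\ast}\wedge \cdots \wedge x_j^{k\ast} \;\in\; \bigwedge^{k}W^{\ast},
\]
so $\operatorname{rank} d_t$ equals the rank of $\Omega\wedge(-):\bigwedge^{(i-1)(k-1)}W^\ast \to \bigwedge^{i(k-1)+1}W^\ast$. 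It therefore suffices to prove that this wedge-multiplication is injective in the relevant range of $i$, whence its rank equals $\dim\bigwedge^{(i-1)(k-1)}W^\ast = \binom{km}{(i-1)(k-1)}$, matching the claim.

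Third, I would establish injectivity by a combinatorial analysis on the multi-graded decomposition $\bigwedge^{\bullet}W^{\ast} = \bigotimes_j \bigwedge^{\bullet}V_j^{\ast}$. On a pure tensor of multi-type $(s_1,\ldots,s_m)$, each summand $x_j^{1\ast}\wedge\cdots\wedge x_j^{k\ast}$ of $\Omega$ acts by zero unless $s_j=0$, in which case it ``promotes'' the $j$-th tensor factor from $\Cc$ to $\bigwedge^{k}V_j^\ast$; so $\Omega\wedge\xi=0$ translates, in each destination multi-type, into a linear relation among those components of $\xi$ whose source types differ by exactly one promoted coordinate. The main obstacle is resolving the cross-terms that appear when $\xi$ has components whose types already contain several full coordinates $s_j=k$, since a single destination can then be reached from several distinct sources. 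The key observation is that, when $(i-1)(k-1)$ is small enough relative to $m$, one can always find a ``free'' coordinate $s_{j^\ast}=0$ that separates these contributions, and an induction on the number of full coordinates in a component forces $\xi=0$, completing the proof.
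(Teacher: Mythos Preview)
Your first two steps are correct and give a clean reformulation: writing $\hH=W\oplus\Cc z$ with $W$ the span of the $x_j^\ell$, the boundary $\partial_t$ kills $\bigl(\bigwedge^{t-1}W\bigr)\wedge z$ and sends $\bigwedge^{t}W$ into $z\wedge\bigwedge^{t-k}W$, so $\dim\operatorname{Im}\partial_t=\operatorname{rank}d_t$; and the transpose of $d_t$ under the standard pairing is indeed left wedge multiplication by $\Omega=\sum_j x_j^{1\ast}\wedge\cdots\wedge x_j^{k\ast}$ on $\bigwedge^{\bullet}W^\ast$. This route is genuinely different from the paper's, which proceeds by induction on $m$ via the splitting $\hH'=\hH\oplus\Cc^k$ and a Vandermonde identity.

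The gap is in your third step, and it is not repairable along the lines you sketch. For \emph{odd} $k$ the $k$-form $\Omega$ satisfies $\Omega\wedge\Omega=0$: each $\omega_j^\ast$ has odd degree, so $\omega_j^\ast\wedge\omega_\ell^\ast=-\omega_\ell^\ast\wedge\omega_j^\ast$ for $j\neq\ell$ and the cross terms cancel in pairs, while $\omega_j^\ast\wedge\omega_j^\ast=0$ trivially. Hence for any $\eta\in\bigwedge^{a-k}W^\ast$ the element $\Omega\wedge\eta$ lies in $\ker\bigl(\Omega\wedge(-):\bigwedge^{a}W^\ast\to\bigwedge^{a+k}W^\ast\bigr)$. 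Taking $\eta=x_1^{1\ast}$ gives $\Omega\wedge\eta=\sum_{j\geq 2}\omega_j^\ast\wedge x_1^{1\ast}\neq 0$ as soon as $m\geq 2$, so $\Omega\wedge(-)$ is \emph{not} injective on $\bigwedge^{a}W^\ast$ once $a\geq k$. For odd $k\geq 3$ this already occurs at $i=3$, where $a=(i-1)(k-1)=2(k-1)\geq k$, and for $m\geq 5$ this value of $i$ lies inside the range singled out in the paper's remark. Your ``free coordinate'' heuristic cannot rescue this: every multi-degree supporting $\Omega\wedge\eta$ carries a full coordinate, and promoting any zero coordinate reproduces precisely the cancellations forced by $\Omega^2=0$. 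Since your first two reductions are rigorous, this in fact shows that the asserted image dimension $\binom{km}{(i-1)(k-1)}$ is too large for odd $k$ and $i\geq 3$; the paper's inductive argument has a parallel defect --- the displayed direct-sum decomposition of $\partial'$ omits the contribution of the newly adjoined block's bracket $[x^1,\ldots,x^k]=w$, and the induction silently invokes the formula at indices $t$ not of the form $i(k-1)+1$.
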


\begin{proof}
We prove it by induction on the dimension of $\hH$. For simplicity, first we analyze the case $k=3$. Then we prove it for arbitrary $k$.\\
Let $k=3$, $m=1$. Hence $\hH$ has basis $\{x,y,z,w\}$ with bracket $[x,y,z]=w$. Therefore $\vert\im \partial_{3}\vert =1$ which is the value of \ref{heisenbergformula} at $i=1$.
Assume that the formula \ref{heisenbergformula} holds for $m$. Let $\hH'=\hH\oplus \Cc^3$ where $\Cc^3=<x,y,z>$. Now 

\begin{align}
\bigwedge^{2i+1}\hH'=\bigwedge^{2i+1}\hH\oplus\left(\bigwedge^{2i}\hH\wedge \Cc^3\right)\oplus\left(\bigwedge^{2i-1}\hH\wedge\bigwedge^2\Cc^3\right)\oplus\left(\bigwedge^{2i-2}\hH\wedge\bigwedge^3\Cc^3\right)
\end{align}
The map $\partial^{'}_{2i+1}:\bigwedge^{2i+1}\hH'\mapsto \bigwedge^{2i-1}\hH'$ decomposes as:
\begin{align}
\partial^{'}_{2i+1}=\partial_{2i+1}\oplus\left(\partial_{2i}\otimes\Cc^3\right)\oplus\left(\partial_{2i-1}\otimes\bigwedge^2\Cc^3\right)\oplus\left(\partial_{2i-2}\otimes\bigwedge^3\Cc^3\right)
\end{align}
Therefore 
\begin{gather}
\dim\!\im\partial^{'}_{2i+1}\!=\!\dim\!\im\partial_{2i+1}\!+\!\dim\!\im\left(\partial_{2i}\otimes\Cc^3\right)+\dim\!\im\left(\partial_{2i-1}\otimes\bigwedge^2\Cc^3\right)+\dim\!\im\left(\partial_{2i-2}\otimes\bigwedge^3\Cc^3\right)=\nonumber\\\dim\im\partial^{'}_{2i+1}=\dim\im\partial_{2i+1}+3\dim\im\partial_{2i}+3\dim\im\partial_{2i-1}+\dim\im\partial_{2i-2}\\
\dim\im\partial^{'}_{2i+1}={3m\choose 2i-2}+{3m\choose 2i-3}.3+{3m\choose 2i-4}.3+{3m\choose 2i-5}\\
\dim\im\partial^{'}_{2i+1}={3m+3\choose 2i-2}
\end{gather}
This finishes the proof for $k=3$

We follow similar arguments to prove the formula \ref{heisenbergformula} for arbitrary $k\geq 4$. 
Let $m=1$, hence $\hH$ has basis $\{x^1,\ldots,x^k,w\}$ with bracket $[x^1,\ldots,x^k]=w$. The size of the image of $\partial_k$ is $1$ which coincides with \ref{heisenbergformula} at $i=1$. Assume that formula holds for $m$. Let $\hH'=\hH\oplus\Cc^k$ where $\Cc^k=\left\langle x^1,\ldots x^k \right\rangle$. We have the following decomposition of $\bigwedge^{ik-(i-1)}\hH'$ and the map $\partial_{ik-(i-1)}$:

\begin{align}
\bigwedge^{ik-(i-1)}\hH'=\bigoplus\limits^{k}_{j=0}\left(\bigwedge^{ik-(i-1)-j}\hH\wedge\bigwedge^j\Cc^k\right)
\end{align}

\begin{align}
\partial^{'}_{ik-(i-1)}=\bigoplus\limits^{k}_{j=0}\left(\partial_{ik-(i-1)-j}\otimes \bigwedge^{j}\Cc^{k}\right)
\end{align}

Therefore:
\begin{gather}
\dim\im\partial^{'}_{ik-(i-1)}=\sum\limits^{k}_{j=0}\dim\im\left(\partial_{ik-(i-1)-j}\otimes \bigwedge^{j}\Cc^{k}\right)=\\
\dim\im\partial^{'}_{ik-(i-1)}=\sum\limits^{k}_{j=0}\dim\im\left(\partial_{ik-(i-1)-j}\right){k\choose j}=\\
\dim\im\partial^{'}_{ik-(i-1)}=\sum\limits^{k}_{j=0}{km\choose ik-(i-1)-j}{k\choose j}=\\
\dim\im\partial^{'}_{ik-(i-1)}={km+k\choose ik-(i-1)}
\end{gather}
This finishes the proof.
\end{proof}

By using the proposition \ref{thmHeisImage}, we present the proof of the theorem \ref{thmHeis}:

\begin{proof}
Since $\dim\bigwedge^{ik-(i-1)}\left(\hH\right)=\left|\ker\partial_{ik-(i-1)}\right|+\left|\im\partial_{ik-(i-1)}\right|$, the size of the kernel is:
\begin{align}
{km+1\choose ik-(i-1)}-{km\choose (i-1)(k-1)}
\end{align}
Therefore by using $\left|H^{ik-(i-1)}\left(\hH\right)\right|=\left|\ker\partial_{ik-(i-1)}\right|-\left|\im\partial_{(i+1)k-i}\right|$ we get:

\begin{gather}
\left|H^{ik-(i-1)}\left(\hH\right)\right|={km+1\choose ik-(i-1)}-{km\choose (i-1)(k-1)}-{km\choose i(k-1)}=\\
{km\choose ik-(i-1)}+{km\choose ik-i}-{km\choose (i-1)(k-1)}-{km\choose i(k-1)}=\\
{km\choose ik-(i-1)}-{km\choose (i-1)(k-1)}
\end{gather}
or equivalently 
\begin{align}
{km\choose i(k-1)+1}-{km\choose (i-1)(k-1)}
\end{align}
since $ik-(i-1)=i(k-1)+1$, which completes proof.
\end{proof}

\begin{remark} The formula is valid with constraints on the dimensions, there is no Poincare type duality theorem for the complex \ref{defcomplex}. Simply, we can choose 
\begin{gather}
\alpha\leq \lfloor\cfrac{km+1}{2}\rfloor
\end{gather}
since we want $\bigwedge^{\alpha}\hH$ be the largest dimensional space, where $\alpha=ik-(i-1)$ and $\lfloor-\rfloor$ is floor function.
Substitution gives:
\begin{gather}
i\leq \cfrac{1}{k-1}\left(\lfloor\cfrac{km+1}{2}\rfloor-1\right)
\end{gather}
\end{remark}

\subsection{Homology of a k-ary ACJ Lie algebra}\label{homACJ}
Let $\gG$ be Lie algebra with basis\\ $\{z,x^1_1,\ldots,x^1_m,x^2_1,\ldots x^2_m,\ldots,x^{k}_1,\ldots, x^k_m\}$ and with the bracket:
\begin{align}
[z,x^1_i,x^2_i,\ldots,x^{k-1}_i]=x^k_i
\end{align}
The case  when $k=2$ were introduced by Armstrong, Cairns and Jessup in \cite{armstrong1997explicit}. $i$th Betti number is
\begin{align}
{m+1\choose \lfloor\frac{i+1}{2}\rfloor}{m\choose\lfloor\frac{i}{2}\rfloor}
\end{align}
where $\lfloor i\rfloor$ denotes the integer part of $i$. To compute it, they use derivation arising from adjoint map and then solve a recursion. We consider a similar reduction.

A k-ary analogue of adjoint map can be made in many ways, we choose:
\begin{gather}
ad(z):\bigwedge^{k-1}\gG\mapsto\gG\\
ad(z)(x_1\wedge\cdots\wedge x_{k-1})=[z,x_1,\ldots,x_k]
\end{gather}

The ACJ type k-ary Lie algebras given in \ref{defACJ} can be seen as a specific example of more general set up. Let $\gG$ be k-ary Lie algebra such that $\mathfrak{a}$ is codimension one abelian ideal in $\gG$ i.e. 
\begin{gather}
\gG=\langle z\rangle\oplus\mathfrak{a}\\
[\gG,\ldots,\gG,\mathfrak{a}]\subseteq\mathfrak{a}
\end{gather}

Then:
\begin{proposition}
Homology of $\gG$ at $\alpha=ik-(i-1)$ is:
\begin{gather}\label{f1}
\vert H^{\alpha}(\gG)\vert=
{ km\choose \alpha}-{km\choose \alpha+k-2}+\dim\ker\theta_{\alpha-1}+\dim\ker\theta_{\alpha+k-2}
\end{gather}
where $\theta_{\alpha}:=ad(z)\left(\bigwedge^{\alpha}\mathfrak{a}\right)$.
\end{proposition}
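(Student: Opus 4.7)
The plan is to exploit the abelian-ideal hypothesis to reduce the Chevalley--Eilenberg differential $\partial_\alpha$ to the map $\theta$ on exterior powers of $\mathfrak{a}$. First I would split
\begin{align*}
\bigwedge^{\alpha}\gG=\bigwedge^{\alpha}\mathfrak{a}\;\oplus\;\Bigl(z\wedge\bigwedge^{\alpha-1}\mathfrak{a}\Bigr)
\end{align*}
according to whether $z$ appears in a monomial. Since $\mathfrak{a}$ is abelian, any $k$-bracket all of whose arguments lie in $\mathfrak{a}$ vanishes, so every term of $\partial_\alpha$ on a pure $\bigwedge^{\alpha}\mathfrak{a}$-monomial is zero. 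On the other summand, the only surviving terms in the sum (\ref{defdiff}) come from $\sigma\in D_{\alpha,k}$ with $\sigma(1)=1$, i.e.\ those that place $z$ inside the bracket, giving $[z,x_{i_1},\ldots,x_{i_{k-1}}]\wedge(\text{remaining }x_j)$. Because $\mathfrak{a}$ is an ideal, $[z,x_{i_1},\ldots,x_{i_{k-1}}]\in\mathfrak{a}$, so the image lands entirely in $\bigwedge^{\alpha-k+1}\mathfrak{a}$, with no $z$-component.

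Identifying $z\wedge(-)$ with $(-)$, the above shows $\partial_\alpha$ restricted to the $z$-part is exactly $\theta_{\alpha-1}:\bigwedge^{\alpha-1}\mathfrak{a}\to\bigwedge^{\alpha-k+1}\mathfrak{a}$. Therefore
\begin{align*}
\ker\partial_\alpha &=\bigwedge^{\alpha}\mathfrak{a}\;\oplus\;z\wedge\ker\theta_{\alpha-1},\\
\operatorname{im}\partial_{\alpha+k-1} &=\operatorname{im}\theta_{\alpha+k-2}\;\subseteq\;\bigwedge^{\alpha}\mathfrak{a}.
\end{align*}
Crucially, the image sits inside the $z$-free summand, so when forming the quotient it only cancels against $\bigwedge^{\alpha}\mathfrak{a}$ and leaves $z\wedge\ker\theta_{\alpha-1}$ untouched. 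Taking dimensions and using rank-nullity on $\theta_{\alpha+k-2}$ to rewrite $\dim\operatorname{im}\theta_{\alpha+k-2}=\binom{km}{\alpha+k-2}-\dim\ker\theta_{\alpha+k-2}$ yields
\begin{align*}
|H^{\alpha}(\gG)|=\binom{km}{\alpha}-\binom{km}{\alpha+k-2}+\dim\ker\theta_{\alpha-1}+\dim\ker\theta_{\alpha+k-2},
\end{align*}
which is precisely (\ref{f1}).

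The only genuine subtlety is the sign bookkeeping in identifying $\partial_\alpha|_{z\wedge\bigwedge^{\alpha-1}\mathfrak{a}}$ with $\theta_{\alpha-1}$: one must verify that the signs produced by the $\sigma\in D_{\alpha,k}$ with $\sigma(1)=1$ agree with those arising when $\theta_{\alpha-1}$ is expanded as a sum over $(k-1)$-subsets of indices. This is a routine shuffle-sign check and the only step with any combinatorial content; everything else is linear algebra driven by the abelian-ideal hypothesis and by the fact that the image of $ad(z)$ lies in $\mathfrak{a}$.
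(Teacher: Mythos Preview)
Your proposal is correct and follows essentially the same approach as the paper: split $\bigwedge^{\alpha}\gG$ according to whether $z$ appears, identify $\partial_\alpha$ on the $z$-summand with $\theta_{\alpha-1}$, and then apply rank--nullity to $\theta_{\alpha+k-2}$. The paper's proof is terser and jumps directly to the line $\dim\bigwedge^{\alpha}\mathfrak{a}+\dim\ker\theta_{\alpha-1}-\dim\operatorname{im}\theta_{\alpha+k-2}$ without spelling out the decomposition, but your argument makes explicit exactly the structure the paper is using. One minor remark: since the statement is purely about dimensions, the sign check you flag is not actually needed---an overall sign on $\theta_{\alpha-1}$ does not affect kernel or image dimensions.
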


\begin{proof}
We can express boundary maps in terms of $\theta$:
\begin{gather}
\vert H^{\alpha}(\gG)\vert=\vert\ker\partial_{\alpha}\vert-\vert\im\partial_{\alpha_{\alpha+k-1}}\vert=\\
\dim\bigwedge^{\alpha}\mathfrak{a}+\dim\ker\theta\left(\bigwedge^{\alpha-1}\mathfrak{a}\right)-\dim\theta\left(\bigwedge^{\alpha+k-2}\mathfrak{a}\right)=\\
\dim\bigwedge^{\alpha}\mathfrak{a}+\dim\ker\theta\left(\bigwedge^{\alpha-1}\mathfrak{a}\right)+\dim\ker\theta\left(\bigwedge^{\alpha+k-2}\mathfrak{a}\right)-\dim\bigwedge^{\alpha+k-2}\mathfrak{a}=\\
{\dim \mathfrak{a}\choose \alpha}-{\dim \mathfrak{a}\choose \alpha+k-2}+\dim\ker\theta_{\alpha-1}+\dim\ker\theta_{\alpha+k-2}
\end{gather}
The expression \ref{f1} holds.
\end{proof}

Therefore computation of the homology reduces to the computation of the dimension of the kernels of $\theta$.
The action of $\theta$ is :
\begin{gather}
\theta_{j}:\bigwedge^j\mathfrak{a}\mapsto\bigwedge^{j-k+2}\mathfrak{a}\\
\theta_j\left(x_1\wedge\cdots\wedge x_j\right)=\sum\limits_{i_1,\ldots, i_{j}} ad(z)(x_{i_1}\wedge \cdots\wedge x_{i_{j}})\wedge x_1\wedge \cdots\wedge\widehat{x_{i_a}}\wedge\cdots\wedge x_{j}=\\
\sum\limits_{i_1,\ldots, i_{j}}[z,x_{i_1},\ldots x_{i_{j}}]\wedge x_1\wedge \cdots\wedge\widehat{x_{i_a}}\wedge\cdots \wedge x_{j}
\end{gather}

We do not know how to find $\ker \theta_j$, since they are subject to recursive relations, and even the case $k=2$ requires work \cite{armstrong1997explicit},\cite{armstrong1996cohomology}. By straightforward computation, we only give the second homology:

\begin{proposition}
The second homology of ACJ type Lie algebra $\gG$ is 
\begin{gather}
\left| H^k(\gG)\right|={km+1\choose k}-m{km-k\choose k-1}-{m+1\choose 2}
\end{gather}
\end{proposition}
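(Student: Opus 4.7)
The plan is to specialize the general formula of the preceding proposition at $\alpha=k$ and reduce the task to computing the two quantities $\dim\ker\theta_{k-1}$ and $\dim\im\theta_{2k-2}$ for this concrete ACJ algebra. After substituting $\dim\ker\theta_{2k-2}=\binom{km}{2k-2}-\dim\im\theta_{2k-2}$ the two $\binom{km}{2k-2}$ terms cancel, and Pascal's identity yields
\[
\vert H^k(\gG)\vert \;=\; \binom{km+1}{k} - m - \dim\im\theta_{2k-2},
\]
where I have also used $\dim\ker\theta_{k-1}=\binom{km}{k-1}-m$. This latter equality is immediate: $\theta_{k-1}=\mathrm{ad}(z)$ sends the critical $(k-1)$-wedge $S_i:=e_{1,i}\wedge\cdots\wedge e_{k-1,i}$ to $e_{k,i}$ and kills every other basis $(k-1)$-wedge, so the image has dimension exactly $m$.

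To attack $\dim\im\theta_{2k-2}$ I would classify basis $(2k-2)$-wedges $T$ of $\mathfrak{a}$ by how many critical subsets $S_i$ they contain. Since $\vert S_i\vert=k-1$ and $\vert T\vert = 2k-2$, this count is $0$, $1$, or $2$, and it equals $2$ precisely when $T=S_i\sqcup S_j$ for distinct $i,j$. Basis wedges of two types automatically lie in $\ker\theta_{2k-2}$: those containing no $S_i$, and those containing exactly one $S_j$ together with the output vector $e_{k,j}$, because $e_{k,j}\wedge(T\setminus S_j)=0$ by repetition. The remaining two strata produce the image, namely (B1) $T=S_j\sqcup U$ with $U$ a $(k-1)$-subset of $\mathfrak{a}\setminus(S_j\cup\{e_{k,j}\})$ not equal to any $S_i$, giving $\pm e_{k,j}\wedge U$, and (C) $T=S_i\sqcup S_j$ with $i<j$, giving $\pm e_{k,j}\wedge S_i\pm e_{k,i}\wedge S_j$.

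The main obstacle is the careful linear-independence analysis of these contributions inside $\bigwedge^k\mathfrak{a}$. A naive enumeration produces $m\bigl(\binom{km-k}{k-1}-(m-1)\bigr)$ type-(B1) generators and $\binom{m}{2}$ type-(C) generators, but distinct (B1) pairs $(j,U)$ and $(j',U')$ with $j\neq j'$ can collapse to the same basis wedge whenever $e_{k,j'}\in U$ and $U\cap\mathrm{row}(j')=\{e_{k,j'}\}$, and the two-term (C) generators can partially overlap with (B1) images. Separating the genuine (C) combinations from the spanning set of (B1) wedges is the content of the computation. Once $\dim\im\theta_{2k-2}$ is extracted as a closed-form expression in $k$ and $m$, substitution back and the identity $\binom{m+1}{2}=\binom{m}{2}+m$ assemble the formula in the statement.
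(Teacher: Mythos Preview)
Your plan follows the same route as the paper: reduce to computing $\lvert\im\partial_{2k-1}\rvert$ (equivalently $\lvert\im\theta_{2k-2}\rvert$) by classifying those $(2k-2)$-wedges in $\mathfrak a$ that contain one or two critical blocks $S_i$. The only cosmetic difference is that you pass through the preceding $\theta$-formula first, whereas the paper writes down $\ker\partial_k$ and $\im\partial_{2k-1}$ directly; both arrive at
\[
\lvert H^k(\gG)\rvert=\binom{km+1}{k}-m-\lvert\im\partial_{2k-1}\rvert .
\]

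The substantive gap in your proposal is precisely the step you label ``the main obstacle'': you do not carry out the linear-independence analysis of the (B1) and (C) images, so no closed form for $\lvert\im\theta_{2k-2}\rvert$ is actually produced. Until that count is pinned down, nothing forces the answer to be $m\binom{km-k}{k-1}+\binom{m}{2}$, which is what the stated formula requires. Your worry about collisions among (B1) images $x^k_j\wedge U$ is well founded: for $k=3$, $m\geq 3$ one already has distinct pairs $(j,U)$ and $(j',U')$ with $j\neq j'$ producing the same basis wedge (take $U=\{x^3_{j'},a\}$, $U'=\{x^3_j,a\}$ with $a$ in a third row). So the ``naive'' (B1) count and the (C) count cannot simply be added.

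It is worth noting that the paper's own proof is equally brief at this point: it writes down $\binom{m}{2}$ for the two-pair contribution and $m\bigl(\binom{km-k+1}{k-1}-\binom{km-k}{k-2}\bigr)=m\binom{km-k}{k-1}$ for the one-pair contribution and sums them, without addressing the overlaps you identify. A direct check at $(k,m)=(3,2)$ gives $\lvert\im\partial_5\rvert=5$ (the four monomials $x^3_1\wedge x^3_2\wedge x^a_b$ with $a\in\{1,2\}$, $b\in\{1,2\}$, together with the single two-term class $x^3_1\wedge S_2\pm x^3_2\wedge S_1$), hence $\lvert H^3\rvert=28$, whereas the displayed formula yields $26$; and for $k=2$ the displayed formula disagrees with the known ACJ value $\binom{m+1}{1}\binom{m}{1}$. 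So the independence issue you raise is not merely a technicality to be filled in: resolving it will change the final count, and you should expect the closed form you obtain to differ from the one in the statement.
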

\begin{proof}
Consider the sequence:
\begin{align}
\bigwedge^{2k-1}\gG\rightarrow\bigwedge^k\gG\rightarrow \gG
\end{align}
Then the image of $\partial_k$ is $\left\{x^k_1,\ldots,x^k_m\right\}$. Therefore the size of the kernel of $\partial_k$ is
\begin{align}
{km+1\choose k}-m
\end{align}
The possible contributors to image are the cycles of the forms:
\begin{enumerate}[label=\roman*)]
\item Cycles containing 2-pairs i.e. $z\wedge x^i_1\wedge\cdots\wedge x^i_{k-1}\wedge x^j_{1}\wedge\cdots\wedge x^j_{k-1}$, in total this gives ${m\choose 2}$ elements in $\bigwedge^k\gG$.
\item Cycles containing 1 pair i.e. $z\wedge x^i_1\wedge\cdots\wedge x^i_{k-1}\wedge *\wedge\cdots\wedge *$. The image is of the form $x^i_k\wedge *$, the total number is 
\begin{align}
m\left({km-k+1\choose k-1}-{km-k\choose k-2}\right)
\end{align}
Therefore homology is 
\begin{gather}
{km+1\choose k}-m-m\left({km-k+1\choose k-1}-{km-k\choose k-2}\right)-{m\choose 2}=\\
{km+1\choose k}-m{km-k\choose k-1}-{m+1\choose 2}
\end{gather}
\end{enumerate}
\end{proof}

\subsection{Homology of 3-step nilpotent k-ary Lie algebras of small rank}\label{homfree3}

Let $\gG$ be $3$-step nilpotent free k-ary Lie algebra on $V$ with $\dim V=k$ where $k\geq 4$. We fix Hall basis of $\gG$ as:\\
There are $k$ elements $x_1,\ldots,x_k$ of degree $1$\\
There is one element $y=[x_1\ldots x_k]$ of degree $k$\\
There are $k$ elements $[x_1,\ldots,\widehat{x_i},\ldots,y] $ of degree $2k-1$.

The complex below computes homology:
\begin{align}\label{f2}
\xymatrixcolsep{5pt}
\xymatrix{0\ar[rr]&&\bigwedge^{2k-1}\gG\ar[rrrr]^{\partial_{2k-1}} &&&& \bigwedge^{k}\gG\ar[rrrr]^{\partial_{k}}  &&&&  \gG \ar[rr] && \Cc
}
\end{align}

Image of $\partial_{k}$ is explicitly degree $k$ and $2k-1$ terms, hence $\vert\im\partial_k\vert=1+k$ and $\vert\ker\partial_k\vert={2k+1\choose k}-(k+1)$\\
\begin{lemma} The image of $\partial_{2k-1}$ is of size $k+1$.
\end{lemma}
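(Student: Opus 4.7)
The plan is to compute $\im(\partial_{2k-1})$ by a case analysis on the $2$-element complement of a basis wedge in $\bigwedge^{2k-1}\mathfrak{g}$. Writing $\mathfrak{g}=V\oplus\langle y\rangle\oplus Z$ with $V=\operatorname{span}(x_i)$ and $Z=\operatorname{span}(z_j)$, each basis wedge of $\bigwedge^{2k-1}\mathfrak{g}$ is specified by a $2$-subset $C\subset\{x_1,\ldots,x_k,y,z_1,\ldots,z_k\}$. The key input is that the only nonzero $k$-brackets in $\mathfrak{g}$ are $[x_{i_1},\ldots,x_{i_k}]=\pm y$ (all $x$-indices distinct) and $[x_{i_1},\ldots,x_{i_{k-1}},y]=\pm z_j$ (with $j$ the missing index); every bracket involving any $z_\ell$ as an input vanishes by the $3$-step nilpotency.

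First I would discard the obviously vanishing cases. If $C$ contains two $x$'s, or if $C=\{x_a,y\}$, then $S=\mathfrak{g}\setminus C$ lacks either enough $x$'s or $y$ itself, so no admissible $k$-subset $T\subset S$ supports a nonzero bracket. Similarly, for $C=\{x_a,z_b\}$ with $a\neq b$, the only admissible bracket produces an output wedge with a repeated $z$-factor, hence zero.

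Next I would read off the two manifest families of nonzero image vectors. For $C=\{y,z_a\}$, $a=1,\ldots,k$, only $T=\{x_1,\ldots,x_k\}$ is admissible, and it yields the $k$ linearly independent vectors $\pm y\wedge z_1\wedge\cdots\wedge\widehat{z_a}\wedge\cdots\wedge z_k$. For diagonal complements $C=\{x_a,z_a\}$ the unique admissible bracket $T=\{x_1,\ldots,\widehat{x_a},\ldots,x_k,y\}$ evaluates to $\pm z_a$ and, after wedging with the remaining $z$'s, reduces to the single vector $\pm z_1\wedge\cdots\wedge z_k$. Together these yield the $k+1$ generators of $\im(\partial_{2k-1})$ claimed in the lemma.

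The remaining case $C=\{z_a,z_b\}$ with $a<b$ is the delicate one: both brackets $T=\{x_1,\ldots,\widehat{x_c},\ldots,x_k,y\}$ for $c\in\{a,b\}$ are admissible, each contributing a term of the form $x_a\wedge(Z\setminus\{z_b\})$ or $x_b\wedge(Z\setminus\{z_a\})$ up to sign. The main obstacle is to combine shuffle signs with the sign convention for $z_j$ carefully enough to show that this pair $\pm x_a\wedge(Z\setminus\{z_b\})\pm x_b\wedge(Z\setminus\{z_a\})$ does not enlarge the image beyond the $k+1$ generators already identified. This sign bookkeeping is the one place where non-trivial computation is needed; once completed and the required cancellation verified, one concludes $|\im(\partial_{2k-1})|=k+1$.
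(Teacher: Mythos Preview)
Your case analysis by the two-element complement $C$ is exactly the approach the paper takes (the paper organises it by how many $x_i$'s, and whether $y$, are present in the $(2k-1)$-wedge), and your treatment of the cases $C\subset\{x_1,\dots,x_k,y\}$, $C=\{x_a,z_b\}$, and $C=\{y,z_a\}$ is correct and matches the paper. The gap is in your final case $C=\{z_a,z_b\}$. You hope that the output
\[
\pm\, x_a\wedge\bigl(Z\setminus\{z_b\}\bigr)\;\pm\;x_b\wedge\bigl(Z\setminus\{z_a\}\bigr)
\]
will either vanish or lie in the span of the $k+1$ generators already found. Neither can happen: the two summands above are distinct basis monomials in $\bigwedge^k\mathfrak g$, so the vector is nonzero regardless of signs; and every one of your $k+1$ previously identified generators (namely $y\wedge(Z\setminus\{z_a\})$ for $a=1,\dots,k$ and $z_1\wedge\cdots\wedge z_k$) contains no $x$-factor, so a vector carrying an $x_i$ cannot lie in their span. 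The ``sign bookkeeping'' you defer cannot produce the cancellation you need, because there is nothing to cancel against.

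In fact the paper's own proof does not obtain $k+1$ either: it records that this case contributes an additional $k$-dimensional piece and concludes $\vert\im\partial_{2k-1}\vert = k + k + 1 = 2k+1$, which is the value actually used in the subsequent theorem (for instance $\vert H^k\vert={2k+1\choose k}-(3k+2)$ comes from $\vert\ker\partial_k\vert-\vert\im\partial_{2k-1}\vert = \bigl({2k+1\choose k}-(k+1)\bigr)-(2k+1)$). So the ``$k+1$'' in the lemma statement is a typo for $2k+1$, and your attempt to force the image down to size $k+1$ is aiming at the wrong target.
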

\begin{proof}
Elements of $\bigwedge^{2k-1}\gG$ which does not contain at least $k-1$ degree one terms are in the kernel. In total we have four possibilities:
\begin{enumerate}[label=\roman*)]
\item All linear terms are in cocycle, no $y$, gives cocyles of $y$ and degree $2k-1$ elements, total size is ${k\choose k-1}$
\item All linear terms are in cocycle, with $y$, gives cycles of the form  
\begin{center}
$\left( x_i\wedge z_i+x_j\wedge z_j\right)\wedge z_a\wedge z_b$
\end{center}
 $i,j$ different than $a,b$ and $z_i=[x_1,\ldots,\widehat{x_i},\ldots,x_k,y]$. There are only k many terms. 
\item all but one linear terms, no y are in the kernel.
\item all but one linear terms with y, cycle of degree 2k-1 terms, there is only one element.
\end{enumerate}
Hence in total, $\vert\im\partial_{2k-1}\vert=k+1+k=2k+1$. Therefore kernel is ${2k+1\choose 2k-1}-(2k+1)$.
\end{proof}
By combining these results, we get:
\begin{theorem}
Homology of free 3-step nilpotent k-ary Lie algebra with $\dim V=k$ is:
\begin{gather*}
\vert H^1(\gG)\vert=k,\hspace{1cm}
\vert H^k(\gG)\vert={2k+1\choose k}-(3k+2),\hspace{1cm}
\vert H^{2k-1}(\gG)\vert=(2k+1)(k-1)
\end{gather*}
\end{theorem}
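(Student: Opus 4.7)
The plan is to assemble the three Betti numbers as differences of kernel and image dimensions in the truncated complex~\ref{f2}. Since $\dim\gG=k+1+k=2k+1$, the next term $\bigwedge^{3k-2}\gG$ vanishes as soon as $3k-2>2k+1$, i.e.\ for $k\geq 4$, so the displayed complex is exhaustive and at most three nontrivial homology groups, at degrees $1$, $k$, and $2k-1$, can occur.

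First I would handle $|H^1(\gG)|$. Because $D_{1,k}$ is empty for $k\geq 2$, the differential $\partial_1$ is identically zero, so $\ker\partial_1=\gG$ has dimension $2k+1$. Combined with the computation $|\im\partial_k|=k+1$ carried out just before the lemma (the single contribution $y=[x_1,\ldots,x_k]$ together with the $k$ contributions $[x_1,\ldots,\widehat{x_i},\ldots,x_k,y]$), this gives $|H^1(\gG)|=(2k+1)-(k+1)=k$.

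For the remaining two groups, I would invoke the lemma, which asserts $|\im\partial_{2k-1}|=2k+1$. Rank–nullity applied to $\partial_k$ gives $|\ker\partial_k|=\binom{2k+1}{k}-(k+1)$, hence
\begin{gather*}
|H^k(\gG)|=\binom{2k+1}{k}-(k+1)-(2k+1)=\binom{2k+1}{k}-(3k+2).
\end{gather*}
For $H^{2k-1}(\gG)$, rank–nullity on $\partial_{2k-1}$ yields $|\ker\partial_{2k-1}|=\binom{2k+1}{2k-1}-(2k+1)=\binom{2k+1}{2}-(2k+1)$, while the incoming differential from $\bigwedge^{3k-2}\gG$ is zero by the dimension count above; this produces $|H^{2k-1}(\gG)|=k(2k+1)-(2k+1)=(2k+1)(k-1)$.

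The main obstacle is really the enumeration behind $|\im\partial_{2k-1}|=2k+1$, which is the content of the preceding lemma. The delicate step is to stratify elements of $\bigwedge^{2k-1}\gG$ according to how many degree-one factors $x_i$ they contain, whether the degree-$k$ generator $y$ appears, and how many degree-$(2k-1)$ generators $[x_1,\ldots,\widehat{x_i},\ldots,y]$ appear; only the four configurations isolated in the lemma yield nonzero images, and one must check that these assemble into a $(2k+1)$-dimensional subspace of $\bigwedge^{k}\gG$ with no hidden linear relations among the four families.
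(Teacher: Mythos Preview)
Your proposal is correct and follows exactly the same route as the paper: compute $|\im\partial_k|=k+1$ directly, invoke the lemma for $|\im\partial_{2k-1}|=2k+1$, and read off the three Betti numbers by rank--nullity on the short complex~\ref{f2}. One cosmetic point: the lemma's \emph{statement} in the paper reads ``size $k+1$'' but its proof (and the theorem's arithmetic) yield $2k+1$, which is the value you correctly use.
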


\paragraph{$k=3$}
Let $\gG$ be $3$-step nilpotent free 3-ary Lie algebra on $V$ with $\dim V=3$. The resolution is very similar to the previous case \ref{f2}, only the length of the complex is $4$, instead of $3$. We use the following Hall basis:\\
There are $3$ elements $x_1,x_2,x_3$ of degree 1\\
There is $1$ element $y=[x_1, x_2, x_3]$ of degree $3$\\
There are $3$ elements $[x_1, x_2,y], [x_1, x_3,y], [x_2,x_3,y]$ of degree $5$.

Chevalley-Eilenberg complex of $\gG$ is:
\[\xymatrixcolsep{5pt}
\xymatrix{0\ar[rr]&&\bigwedge^{7}\gG\ar[rrrr]^{\partial_7}&&&&\bigwedge^{5}\gG\ar[rrrr]^{\partial_{5}} &&&& \bigwedge^{3}\gG\ar[rrrr]^{\partial_{3}}  &&&&  \gG \ar[rr] && \Cc
}\]

Image of $\partial_3$ is of size $4$, hence kernel of $\partial_3$ is ${7\choose 3}-4=31$. 
Elements of $\bigwedge^{5}\gG$ which does not contain at least $2$ degree one terms are in the kernel. Hence we have four cases for the image of $\partial_5$:
\begin{enumerate}[label=\roman*)]
\item All linear terms are in cycle, no $y$, gives cycles of $y$ and degree $5$ elements, total size is $3$
\item All linear terms are in cocycle, with $y$, gives elements of the form:
$\partial(x_1\wedge x_2\wedge x_3\wedge y\wedge z_1)=-x_3\wedge z_1\wedge z_3+x_2\wedge z_1\wedge z_2$\\
$\partial(x_1\wedge x_2\wedge x_3\wedge y\wedge z_2)=x_3\wedge z_2\wedge z_3+x_1\wedge z_1\wedge z_2$\\
$\partial(x_1\wedge x_2\wedge x_3\wedge y\wedge z_3)=-x_2\wedge z_2\wedge z_3-x_1\wedge z_1\wedge z_3$\\
They are linearly independent, hence image is $3$ dimensional for these cycles.  
\item all but one linear terms, no y are in the kernel.
\item all but one linear terms with y: cycle of degree $5$ terms, there is only one element.
\end{enumerate}
Hence in total, $\vert\im\partial_{5}\vert=7$. Image of $\partial_7$ is zero which is obvious. By combining these results, we get:

\begin{theorem}\label{thm3}
Homology of free 3-step nilpotent 3-ary Lie algebra with $\dim V=3$ is:
\begin{gather*}
\vert H^1(\gG)\vert=3,\hspace{1cm}
\vert H^3(\gG)\vert=24,\hspace{1cm}
\vert H^{5}(\gG)\vert=14\hspace{1cm}
\vert H^{7}(\gG)\vert=1
\end{gather*}
\end{theorem}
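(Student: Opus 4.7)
The plan is to use the short Chevalley-Eilenberg complex
\[
0 \to \bigwedge^{7}\gG \xrightarrow{\partial_7} \bigwedge^{5}\gG \xrightarrow{\partial_5} \bigwedge^{3}\gG \xrightarrow{\partial_3} \gG \to \Cc,
\]
whose graded dimensions are $1, 21, 35, 7$. First I would dispatch the two extreme homologies. For $H^1$, observe that $\im\partial_3$ is the span of all triple brackets of Hall basis elements, which by construction of the Hall basis equals $\langle y, z_1, z_2, z_3\rangle$; hence $\dim H^1 = 7-4 = 3$. For $H^7$ it suffices to check $\partial_7 = 0$ on the unique top basis element $x_1\wedge x_2\wedge x_3\wedge y\wedge z_1\wedge z_2\wedge z_3$: the only $3$-subsets producing nonzero brackets are $\{x_1,x_2,x_3\}$ (giving $y$) and the three sets $\{x_i,x_j,y\}$ (giving $\pm z_k$), but in each case the bracket output already appears among the remaining four wedge factors, so every summand of $\partial_7$ vanishes and $\dim H^7 = 1$.

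Next I would compute $\ker\partial_3$ and $\im\partial_5$. The surjectivity of $\partial_3$ onto $\langle y, z_1, z_2, z_3\rangle$ gives $\dim\ker\partial_3 = 35 - 4 = 31$. For $\im\partial_5$, I would enumerate $5$-wedges of Hall basis elements by how many of the $x_i$'s appear and whether $y$ appears. Since the only nonzero triple brackets are $[x_i, x_j, x_k] = \pm y$ or $[x_i, x_j, y] = \pm z_\ell$, any $5$-wedge containing fewer than two $x$'s lies in $\ker\partial_5$. The four surviving cases give: (i) three $x$'s and two $z$'s: each wedge maps to $y \wedge z_i \wedge z_j$, producing $3$ linearly independent images; (ii) three $x$'s, $y$, and one $z_\ell$: the three wedges $x_1\wedge x_2\wedge x_3\wedge y\wedge z_\ell$ map to sums of terms of the form $x_a\wedge z_b\wedge z_c$, and a direct sign computation shows the three resulting images have pairwise disjoint supports in $\bigwedge^3\gG$; (iii) two $x$'s and two $z$'s without $y$: no $3$-subset carries a nonzero bracket; (iv) two $x$'s, $y$, and two $z_\ell$'s: the only possibly nonzero contribution is $[x_i, x_j, y] \wedge z_a \wedge z_b = \pm z_k \wedge z_a \wedge z_b$, which survives only when $\{k, a, b\} = \{1,2,3\}$, and all three such preimages collapse to $\pm z_1 \wedge z_2 \wedge z_3$ up to sign, contributing $1$ dimension.

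The main obstacle is the bookkeeping in case (ii): one must track the three signs carefully in order to verify that the images occupy pairwise disjoint coordinate pairs of $\bigwedge^3\gG$, which is exactly what secures their linear independence. Summing $3 + 3 + 0 + 1 = 7$ yields $\dim\im\partial_5 = 7$, so $\dim H^3 = 31 - 7 = 24$ and $\dim H^5 = 21 - 7 = 14$, finishing the proof. The Euler characteristic $1 - 21 + 35 - 7 = 8 = 1 - 14 + 24 - 3$ serves as a useful consistency check on the final numbers.
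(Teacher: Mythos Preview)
Your proof is correct and follows essentially the same case analysis as the paper: both compute $\dim\im\partial_3=4$, then split the computation of $\im\partial_5$ into the same four cases according to how many $x_i$'s and whether $y$ appear, arriving at $3+3+0+1=7$, and your observation that the three images in case~(ii) have pairwise disjoint supports in $\bigwedge^3\gG$ is exactly what the paper verifies by writing them out explicitly. One small slip: in your case~(iii) you write ``two $x$'s and two $z$'s without $y$'', but a $5$-wedge with two $x$'s and no $y$ must contain all three $z$'s; the conclusion (no $3$-subset has a nonzero bracket) is unaffected.
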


\section{Free 2-step Nilpotent k-ary Lie Algebras}\label{sectionFree}
We divide this section into three parts. First we discuss homology of free 2-step nilpotent k-ary Lie algebras as $GL(V)$ representations, and state a result on the second homology when $k=3$. In the second part, we obtain lower bound of Betti numbers. We discuss representation stability of these algebras at the end of the section.

\subsection{Properties} We recall the definition \ref{deffree2} of free 2-step nilpotent k-ary Lie algebra: $\mathfrak{n}^k_2=V\oplus\bigwedge^kV$, $\dim_{\Cc} V=n\geq k$ with respect to bracket:
\begin{center}
$\mathfrak{n}^k_2\times\cdots\times\mathfrak{n}^k_2\rightarrow \mathfrak{n}^k_2$\\
$[v_1,\ldots,v_k]=\begin{cases} 
v_1\wedge\cdots\wedge v_k &v_i\in V \quad \forall i\\
0 &\text{otherwise}
\end{cases}$
\end{center}
We also recall the complex \ref{defcomplex}:

\begin{align}\label{c2}
\xymatrixcolsep{5pt}
\xymatrix{& ...\bigwedge^{3k-2}\mathfrak{n}^k_2\ar[rrrr]^{\partial_{3k-2}}  &&&& \bigwedge^{2k-1}\mathfrak{n}^k_2\ar[rrrr]^{\partial_{2k-1}} &&&& \bigwedge^{k}\mathfrak{n}^k_2\ar[rrrr]^{\partial_{k}}  &&&&  \mathfrak{n}^k_2\ar[rr] &&\Cc
}
\end{align}
with the differential \ref{defdiff}:
\begin{align*}
\partial_t(x_1\wedge\ldots\wedge x_t)=\sum\limits_{\sigma\in D_{t,k}}sgn(\sigma)[x_{\sigma(1)},\ldots,x_{\sigma(k)}]\wedge x_{\sigma(k+1)}\wedge \cdots \wedge x_{\sigma(t)}
\end{align*}

\begin{lemma} Homology of the complex \ref{c2} is $GL(V)$ module.
\end{lemma}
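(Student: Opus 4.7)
The plan is to show that the differential $\partial_t$ in the complex \ref{c2} is $GL(V)$-equivariant with respect to a natural $GL(V)$-action on $\bigwedge^\bullet \mathfrak{n}^k_2$; once this is established the statement follows immediately, since kernels and images of equivariant maps are subrepresentations, hence so is the quotient.

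First I would make explicit the $GL(V)$-action on $\mathfrak{n}^k_2 = V \oplus \bigwedge^k V$. The group $GL(V)$ acts on $V$ tautologically and on $\bigwedge^k V$ by $g\cdot(v_1 \wedge \cdots \wedge v_k) = (gv_1)\wedge\cdots\wedge(gv_k)$; taking the direct sum gives a well-defined action on $\mathfrak{n}^k_2$. This action extends diagonally to each $\bigwedge^t \mathfrak{n}^k_2$, namely
\begin{align*}
g \cdot (y_1 \wedge \cdots \wedge y_t) = (g \cdot y_1) \wedge \cdots \wedge (g \cdot y_t),
\end{align*}
with $y_i \in V$ or $y_i \in \bigwedge^k V$ acted on in the respective component.

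Next I would verify that the $k$-bracket is $GL(V)$-equivariant. The bracket $[v_1,\ldots,v_k]$ is nonzero only when all $v_i \in V$, in which case it equals $v_1 \wedge\cdots\wedge v_k$; in all other cases it is zero. Since the action of $g \in GL(V)$ preserves the summand $V$ (it sends $V$ to $V$ and $\bigwedge^k V$ to $\bigwedge^k V$), the condition ``all $v_i \in V$'' is preserved under $g$, and on that locus
\begin{align*}
[g v_1, \ldots, g v_k] = (gv_1)\wedge\cdots\wedge(gv_k) = g\cdot (v_1 \wedge\cdots\wedge v_k) = g\cdot[v_1,\ldots,v_k],
\end{align*}
while on the complementary locus both sides vanish. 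Hence the bracket is $GL(V)$-equivariant.

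Finally, the differential $\partial_t$ defined in \ref{defdiff} is assembled as an alternating sum over shuffles of the bracket on $k$ arguments wedged with the remaining $t-k$ arguments. Since both the wedge product and the $k$-bracket are $GL(V)$-equivariant, and the signs and shuffle set $D_{t,k}$ are independent of the vector-space variables, $\partial_t$ commutes with the $GL(V)$-action. Consequently $\ker\partial_t$ and $\operatorname{im}\partial_{t+k-1}$ are $GL(V)$-subrepresentations of $\bigwedge^t \mathfrak{n}^k_2$, and so the homology $H^{ik-i+1}(\mathfrak{n}^k_2)$ defined in \ref{defhomology} inherits a natural $GL(V)$-module structure. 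I do not foresee any real obstacle: the argument is purely functorial and the only content is the observation that the bracket $(v_1,\ldots,v_k)\mapsto v_1\wedge\cdots\wedge v_k$ is tautologically $GL(V)$-equivariant.
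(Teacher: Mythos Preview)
Your proof is correct and follows essentially the same approach as the paper: define the natural $GL(V)$-action on $\mathfrak{n}^k_2$ and its exterior powers, verify that the bracket (hence the differential) is $GL(V)$-equivariant, and conclude that kernels, images, and thus homology are $GL(V)$-modules. Your write-up is in fact slightly more careful than the paper's, since you explicitly check that the action preserves the decomposition $V\oplus\bigwedge^kV$ and handle the vanishing case of the bracket.
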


\begin{proof}
Since $\mathfrak{n}^k_2=V\oplus\bigwedge^k V$ is representation of $GL(V)$, kernels and the images of boundary map $\partial$ are also representations of $GL(V)$, which makes each homology $GL(V)$ representation. In details, the action of $g\in GL(V)$ on $\mathfrak{n}^k_2$ is
\begin{align}
g[x_1,\ldots,x_k]=g\left(\sum_{\sigma\in\mathfrak{S}_k}\sign(\sigma)x_{\sigma(1)}\otimes\cdots\otimes x_{\sigma(k)}\right)=\sum_{\sigma}\sign(\sigma)gx_{\sigma(1)}\otimes\cdots\otimes gx_{\sigma(k)}=[gx_1,\ldots,gx_k]
\end{align}

and on cycles $\bigwedge^a\mathfrak{n}^k_2$ is $g.(v_1\wedge\ldots\wedge v_a)=gv_1\wedge\ldots\wedge gv_a$
\end{proof}

\begin{remark} There are many works for the case $k=2$ \cite{sigg1996laplacian}, \cite{jozefiak1988representation}. First of all, the plethsym $\bigwedge^j\left(\bigwedge^2V\right)$ is well-known \cite{landsberg}. Secondly, multiplicity of every irreducible summand in $\bigwedge\left(V\oplus\bigwedge^2V\right)$ is one. Combining these together with the result of Kostant \cite{kostant1963lie}, homology of free 2-step nilpotent Lie algebra is given by
\begin{align}\label{thmsigg}
H^p(\mathfrak{n}^2_2)=\bigoplus\limits_{\substack{\lambda=\lambda^{t}\\ \lambda\vdash p}} S_{\lambda}V
\end{align}
where $\lambda$ is self-conjugate partition of size $p$ \cite{sigg1996laplacian}, \cite{sam2013homology}.
\end{remark}

When $k\geq 3$, there is not any result relating homology of nilpotent k-ary algebras to eigenvalues of Lie algebra Laplacian  as in \cite{kostant1963lie}. Moreover, there is no formula to compute the plethsym $\bigwedge^j\left(\bigwedge^kV\right)$ $j,k\geq 3$. Therefore an analogous result to \ref{thmsigg} is a difficult one. Nevertheless we prove:

\begin{proposition} $H^{(3)}\left(\mathfrak{n}^3_2\right)=S_{221}V\oplus S_{2111}V\oplus S_{3211}V$
\end{proposition}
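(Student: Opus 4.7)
Set $W:=\bigwedge^3 V$, so $\mathfrak{n}^3_2 = V\oplus W$ with bracket the canonical isomorphism $\pi\colon \bigwedge^3 V\xrightarrow{\sim} W$. The exterior algebra carries a bigrading
\[
\bigwedge^n \mathfrak{n}^3_2 \;=\; \bigoplus_{a+b=n}\bigwedge^a V\otimes \bigwedge^b W,
\]
and the Chevalley--Eilenberg differential has bidegree $(-3,+1)$: it replaces three $V$-entries by one $W$-entry via $\pi$ and vanishes as soon as one bracketed entry lies in the central subspace $W$. Thus $\partial_3$ is just $\pi$ on $\bigwedge^3 V$ and vanishes on the other bi-components, so $\ker\partial_3 = (\bigwedge^2 V\otimes W)\oplus(V\otimes\bigwedge^2 W)\oplus\bigwedge^3 W$. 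Since $\partial_5$ also respects the bigrading, $H^3$ splits as a direct sum over the bidegrees $(2,1),(1,2),(0,3)$, to be computed separately.

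The map $\partial_5$ decomposes into three nontrivial components: (a) $\delta_1\colon \bigwedge^5 V \to \bigwedge^2 V\otimes W$, the $(2,3)$-comultiplication composed with $\mathrm{id}\otimes\pi$; (b) $\delta_2\colon \bigwedge^4 V\otimes W \to V\otimes\bigwedge^2 W$, which factors as $(\Delta^{1,3}\otimes\mathrm{id}_W)$ followed by the wedge $V\otimes W\otimes W \to V\otimes\bigwedge^2 W$; (c) $\delta_3\colon \bigwedge^3 V\otimes\bigwedge^2 W\to\bigwedge^3 W$, given by $\pi\otimes\mathrm{id}$ composed with $W\otimes\bigwedge^2 W\to\bigwedge^3 W$. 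Component (c) is surjective, so the $(0,3)$-summand of $\ker\partial_3$ contributes nothing to $H^3$.

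Using Pieri's rule and the standard plethysm $\bigwedge^2(\bigwedge^3 V)\cong S_{2211} V$, the relevant pieces decompose as
\begin{align*}
\bigwedge^2 V\otimes W &\cong S_{221}V\oplus S_{2111}V\oplus S_{1^5}V,\\
V\otimes \bigwedge^2 W &\cong S_{3211}V\oplus S_{2221}V\oplus S_{22111}V,\\
\bigwedge^4 V\otimes W &\cong S_{2221}V\oplus S_{22111}V\oplus S_{2,1^5}V\oplus S_{1^7}V,
\end{align*}
while $\bigwedge^5 V = S_{1^5}V$ is irreducible. Since comultiplication is injective, $\delta_1$ is an isomorphism onto the unique $S_{1^5}V$-summand of $\bigwedge^2 V\otimes W$, so this piece of $H^3$ equals $S_{221}V\oplus S_{2111}V$. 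By Schur's lemma the image of $\delta_2$ lies in the common summands $S_{2221}V\oplus S_{22111}V$, so the $(1,2)$-quotient is contained in $S_{3211}V$.

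The main obstacle is to upgrade this last containment to an equality, i.e.\ to show that $\delta_2$ is nonzero on each of $S_{2221}V$ and $S_{22111}V$. My plan is to test on explicit highest weight vectors, using the shorthand $w_{ijk}:=e_i\wedge e_j\wedge e_k\in W$. The element $(e_1\wedge e_2\wedge e_3\wedge e_4)\otimes w_{123}$ is plainly a highest weight vector of weight $(2,2,2,1)$, and a direct computation yields
\[
\delta_2\bigl((e_1\wedge e_2\wedge e_3\wedge e_4)\otimes w_{123}\bigr) = -e_3\otimes(w_{123}\wedge w_{124}) + e_2\otimes(w_{123}\wedge w_{134}) - e_1\otimes(w_{123}\wedge w_{234}),
\]
whose three summands are linearly independent. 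For $S_{22111}$ the weight space of weight $(2,2,1,1,1)$ in $\bigwedge^4 V\otimes W$ is $3$-dimensional, spanned by $(e_1\wedge e_2\wedge e_3\wedge e_4)\otimes w_{125}$, $(e_1\wedge e_2\wedge e_3\wedge e_5)\otimes w_{124}$, $(e_1\wedge e_2\wedge e_4\wedge e_5)\otimes w_{123}$; imposing $E_{34}v = E_{45}v = 0$ singles out $v$ as the alternating combination of these three vectors, and a direct expansion reveals the coefficient $+2$ on the term $e_5\otimes(w_{123}\wedge w_{124})$ in $\delta_2(v)$, so $\delta_2(v)\neq 0$. Assembling the three quotients gives $H^{(3)}(\mathfrak{n}^3_2) = S_{221}V\oplus S_{2111}V\oplus S_{3211}V$.
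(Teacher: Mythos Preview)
Your strategy coincides with the paper's: both exploit the bigrading on $\bigwedge\mathfrak{n}^3_2$, decompose $\ker\partial_3$ and $\operatorname{im}\partial_5$ into Schur functors, and take the quotient summand by summand. The paper simply asserts that $\operatorname{im}\delta_2$ equals the full list of irreducibles common to $\bigwedge^4V\otimes W$ and $V\otimes\bigwedge^2W$ (equivalently, that $\delta_2$ is injective) without further justification, whereas you take the more careful route of testing highest weight vectors.

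There is, however, a concrete error that creates a real gap. The ``standard plethysm'' you quote is wrong: one has
\[
\bigwedge^2\!\Bigl(\bigwedge^3V\Bigr)\;\cong\; S_{2211}V\ \oplus\ \bigwedge^6V,
\]
not just $S_{2211}V$; the extra summand is already visible at $\dim V=6$. Consequently your decomposition of $V\otimes\bigwedge^2W$ is missing $S_{21^5}V\oplus\bigwedge^7V$ (compare the paper's formula~(\ref{k3})). Both of these irreducibles \emph{also} occur in $\bigwedge^4V\otimes W$, which you decomposed correctly, so Schur's lemma no longer confines $\operatorname{im}\delta_2$ to $S_{2221}V\oplus S_{22111}V$. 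To finish you must also show that $\delta_2$ is nonzero on the $S_{21^5}V$- and $\bigwedge^7V$-isotypic components; otherwise those pieces would survive in the $(1,2)$-quotient and the stated answer would acquire extra terms. The two missing checks are routine highest-weight computations of the same flavor you already carried out, but as written your argument is only valid for $\dim V\le 5$.
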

\begin{proof}
We will compute the kernel of $\partial_3$ and image of $\partial_5$ in the sequence \ref{defcomplex}. It is clear that every cycle of $\bigwedge^3\mathfrak{n}^3_2$ except $\bigwedge^3V$ is in the kernel of $\partial_3$. Therefore :

\begin{align}\label{k1}
\ker\partial_3=\bigwedge^2V\otimes\bigwedge^3V\bigoplus V\otimes\bigwedge^2\left(\bigwedge^3V\right)\bigoplus\bigwedge^3\left(\bigwedge^3V\right)
\end{align}
Before computing the image of $\partial_5$, we list some formulas which are consequences of Littlewood-Richardson and plethysm rules \cite{landsberg}:

\begin{gather}\label{k2}
\bigwedge^2V\otimes\bigwedge^3V\cong \bigwedge^5V\oplus S_{2111}V\oplus S_{221}V \\
\label{k3} V\otimes\bigwedge^2\left(\bigwedge^3V\right)\cong S_{3211}V\oplus S_{2221}V\oplus S_{22111}V\oplus S_{211111}V\oplus \bigwedge^7V \\
\label{k4} \bigwedge^4V\otimes\bigwedge^3V\cong S_{2221}V\oplus S_{22111}V\oplus S_{211111}V\oplus\bigwedge^7V
\end{gather}

Image of $\partial_5$ is
\begin{gather}
\im\partial_5=\bigwedge^5V\oplus\left(\bigwedge^4V\otimes\bigwedge^3V\bigcap V\otimes\bigwedge^2\left(\bigwedge^3V\right)\right)\oplus \bigwedge^3\left(\bigwedge^3V\right)\\
=S_{2221}V\oplus S_{22111}V\oplus S_{211111}V\oplus\bigwedge^7V \oplus \bigwedge^3\left(\bigwedge^3V\right)\label{k5}
\end{gather}
By formulas \ref{k1},\ref{k2},\ref{k3},\ref{k4} and \ref{k5}, homology is 
\begin{gather}
H^{(3)}(\mathfrak{n}^3_2)=\cfrac{\ker\partial_3}{\im\partial_5}=S_{221}V\oplus S_{2111}V\oplus S_{3211}V
\end{gather}

\end{proof}

\subsection{Lower Bounds of Betti Numbers}
In this part, we study homology and lower bounds of Betti rank of free 2-step nilpotent k-ary Lie algebras. We start with an easy observation:
%\begin{lemma}\label{homologylemma} The rank of the homology of the sequence of vector spaces
%\begin{center}
%$A\mapsto B\mapsto C$
%\end{center}
%is bounded below by $\vert B\vert-\vert A\vert -\vert C\vert$
%\end{lemma}
%\begin{proof}
%Homology at $B$ is $H(B)=\ker\partial_B/\im\partial_A$. Notice that:
%\begin{gather}
%\vert\im\partial_A\vert=\vert A\vert-\vert\ker\partial_A\vert%\leq \vert A\vert\\
%\vert\im\partial_B\vert\leq\vert C\vert
%\end{gather}
%Therefore $\vert H(B)\vert =\vert B\vert-\vert\im\partial_A\vert-\vert\im\partial_B\vert\geq\vert B\vert-\vert A\vert-\vert C\vert$
%\end{proof}

\begin{lemma} The decomposition of $\bigwedge^{ik-(i-1)}\mathfrak{n}^k_2$ is
\begin{gather}
\bigwedge^{ik-(i-1)}\mathfrak{n}^k_2=\bigoplus\limits^{ik-(i-1)}_{j=0}\left(\bigwedge^{ik-(i-1)-j}V\otimes\bigwedge^j\left(\bigwedge^kV\right)\right)
\end{gather}
\end{lemma}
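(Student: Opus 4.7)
The statement is a direct application of the standard decomposition of the exterior algebra of a direct sum of two vector spaces. My plan is to exploit the fact that $\mathfrak{n}^k_2 = V \oplus \bigwedge^k V$ as a $\Cc$-vector space, and then invoke the natural isomorphism
\begin{gather*}
\bigwedge\nolimits^{n}(A\oplus B)\;\cong\;\bigoplus_{j=0}^{n}\bigwedge\nolimits^{n-j}A\,\otimes\,\bigwedge\nolimits^{j}B
\end{gather*}
which holds for any two vector spaces $A$ and $B$. Applying this to $A=V$ and $B=\bigwedge^k V$, and specializing to the degree $n=ik-(i-1)$, immediately yields the claimed formula.

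The only step that requires comment is the justification of the above isomorphism. I would sketch it as follows: one defines the map by sending an elementary wedge $w_1\wedge\cdots\wedge w_n$, where each $w_s$ lies in $A$ or $B$, to its decomposition according to how many factors come from $B$; antisymmetry lets us reorder the $A$-factors and the $B$-factors separately (picking up a sign from the shuffle), and the sum over all shuffles is precisely the direct sum on the right. One checks this is a well-defined $GL(A)\times GL(B)$-equivariant isomorphism by computing dimensions via the Vandermonde-type identity $\binom{\dim A+\dim B}{n}=\sum_{j}\binom{\dim A}{n-j}\binom{\dim B}{j}$ and noting injectivity on elementary tensors.

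There is no real obstacle here; this is a routine vector-space identity and the main purpose of stating it is to organize the subsequent analysis of the complex \ref{c2} degree by degree, separating the contributions coming from $V$ and from $\bigwedge^k V$. Since $\mathfrak{n}^k_2$ is only a direct sum of vector spaces (not of k-ary Lie algebras), the decomposition is purely linear, and the differential $\partial$ will of course mix the summands — that nontriviality is what makes the subsequent computation of Betti numbers interesting, but it does not affect this lemma.
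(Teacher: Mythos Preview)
Your proof is correct and takes essentially the same approach as the paper, which simply writes ``It follows from Cauchy formula.'' You have just spelled out what that formula says and why it holds, which is fine; there is no substantive difference in strategy.
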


\begin{proof}
It follows from Cauchy formula.
\end{proof}

\begin{lemma} Boundary maps are degree preserving, and their restrictions on each degree is:
\begin{gather}
\partial^j_{ik-(i-1)}:\bigwedge^{ik-(i-1)-j}V\otimes\bigwedge^j\left(\bigwedge^kV\right)\longrightarrow \bigwedge^{(i-1)k-i-j+1}V\otimes\bigwedge^{j+1}\left(\bigwedge^{k}V\right)
\end{gather}
\end{lemma}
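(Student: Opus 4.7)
The plan is to track how the differential $\partial$ from \ref{defdiff} acts on each summand of the Cauchy decomposition given by the preceding lemma. I would begin by writing a general element of $\bigwedge^{ik-(i-1)-j}V\otimes\bigwedge^j(\bigwedge^k V)$ as a wedge $v_1\wedge\cdots\wedge v_p\wedge w_1\wedge\cdots\wedge w_j$ with $v_\ell\in V$, $w_\ell\in\bigwedge^k V$, and $p=ik-(i-1)-j$. This representation is unambiguous up to antisymmetry because $\mathfrak{n}^k_2=V\oplus\bigwedge^k V$ is already a direct sum.

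Next I would apply $\partial_t$ term by term, recalling that each summand indexed by $\sigma\in D_{t,k}$ selects $k$ of the $t$ wedge-factors, inserts them into the $k$-bracket, and wedges the result with the remaining $t-k$ factors. Here the crucial structural input is the definition of $\mathfrak{n}^k_2$: the bracket $[y_1,\ldots,y_k]$ equals $y_1\wedge\cdots\wedge y_k\in\bigwedge^k V$ when every $y_\ell\in V$, and vanishes as soon as any $y_\ell\in\bigwedge^k V$ (that is the content of the ``otherwise $0$'' clause in definition \ref{deffree2}). Consequently, in $\partial(v_1\wedge\cdots\wedge v_p\wedge w_1\wedge\cdots\wedge w_j)$ only those $\sigma$ which pick all $k$ of the bracketed entries from among the $v_\ell$'s survive.

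For such a surviving $\sigma$, the $k$ chosen vectors $v_{\ell_1},\ldots,v_{\ell_k}$ are replaced by the single element $v_{\ell_1}\wedge\cdots\wedge v_{\ell_k}\in\bigwedge^k V$, and this is then wedged with the remaining $p-k$ elements of $V$ together with the untouched $w_1,\ldots,w_j$. The resulting monomial has
\begin{gather*}
p-k \;=\; ik-(i-1)-j-k \;=\; (i-1)k-i-j+1
\end{gather*}
factors from $V$ and $j+1$ factors from $\bigwedge^k V$, hence lives in $\bigwedge^{(i-1)k-i-j+1}V\otimes\bigwedge^{j+1}(\bigwedge^k V)$, which is exactly the stated codomain.

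The only content to verify is that brackets involving any factor from $\bigwedge^k V$ vanish, which is immediate from the definition of $\mathfrak{n}^k_2$; there is no genuine obstacle. Adding the images of $\partial$ on the various Cauchy summands $(j=0,1,\ldots)$ therefore recovers the full differential $\partial_{ik-(i-1)}$ and shows it decomposes as $\bigoplus_j \partial^j_{ik-(i-1)}$ with the degree shift described.
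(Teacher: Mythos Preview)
Your argument is correct and follows the same idea as the paper's proof: since $\bigwedge^k V$ is the center of $\mathfrak{n}^k_2$, the only nonvanishing brackets in the differential are those involving $k$ entries from $V$, so $\partial$ removes $k$ factors from the $V$-part and contributes one new factor to the $\bigwedge^k V$-part. The paper states this in two sentences, whereas you have spelled out the bookkeeping explicitly; the content is the same.
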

\begin{proof}
Since center of $\mathfrak{n}^k_2$ is $\bigwedge^k V$, only nonzero bracket is of the form $[x_1\ldots,x_k]$, $x_i\in V$. Boundary map $\partial$ takes $k$ pairs from $\bigwedge^{ik-(i-1)-j}$, and adds $1$ element to $\bigwedge^j\left(\bigwedge^k V\right)$.
\end{proof}

\begin{proposition}\label{homologydecomposition} Homology of complex at $ik-(i-1)$ is 
\begin{gather}
H^{ik-(i-1)}\left(\gG\right)=\cfrac{\ker\partial_{ik-(i-1)}}{im\partial_{(i+1)k-i}}=\bigoplus\limits^{ik-(i-1)}_{j=0}\cfrac{\ker\partial^j_{ik-(i-1)}}{\im\partial^j_{(i+1)k-i}}
\end{gather}
\end{proposition}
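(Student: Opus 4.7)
The plan is to show that the Chevalley–Eilenberg complex \ref{c2} splits as a direct sum of subcomplexes indexed by a natural weight grading, whereupon the claimed decomposition of $H^{ik-(i-1)}$ follows formally. First I would endow $\mathfrak{n}^k_2 = V\oplus\bigwedge^k V$ with the $\Zz$-grading in which $V$ has weight one and $\bigwedge^k V$ has weight $k$, and extend it multiplicatively to $\bigwedge \mathfrak{n}^k_2$. Writing $A^{(i)}_j := \bigwedge^{ik-(i-1)-j}V\otimes \bigwedge^j(\bigwedge^k V)$ for the summand appearing in the first lemma of this subsection, the weight of $A^{(i)}_j$ is $(ik-(i-1)-j) + jk = (ik-(i-1)) + j(k-1)$; hence at a fixed homological level the weight determines $j$ and vice versa.

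The substantive step is that $\partial$ preserves this weight. By the second lemma above, the restriction $\partial^j_{ik-(i-1)}$ maps $A^{(i)}_j$ into $A^{(i-1)}_{j+1}$. Weight preservation then reduces to the observation that the only nonzero bracket $[v_1,\ldots,v_k] = v_1\wedge\cdots\wedge v_k$ replaces $k$ weight-one vectors by a single weight-$k$ element, which is weight-neutral; combined with the explicit formula \ref{defdiff} for $\partial$, this shows that $\partial$ is homogeneous of weight zero. Consequently the complex decomposes as $\bigoplus_N C^{(N)}_\bullet$, where $C^{(N)}_\bullet$ collects the weight-$N$ summands across all homological degrees, and each such $C^{(N)}_\bullet$ is a genuine subcomplex since $\partial^2 = 0$ respects the splitting.

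Since homology commutes with direct sums, passing to $H^{ik-(i-1)}$ produces a decomposition indexed by weight, which by the bijection above is equivalently a decomposition indexed by $j\in\{0,\ldots,ik-(i-1)\}$. Matching this against the terms $\ker\partial^j_{ik-(i-1)}/\im\partial^j_{(i+1)k-i}$ of the statement is then just a bookkeeping exercise, once one identifies $\im \partial^j_{(i+1)k-i}$ as the image that lands in the summand of $\bigwedge^{ik-(i-1)}\mathfrak{n}^k_2$ corresponding to index $j$. I do not anticipate any real obstacle: the entire content is the weight compatibility of $\partial$, which is built into the definition of the bracket on $\mathfrak{n}^k_2$, and the only mild care needed is reconciling the indexing conventions on the two sides of the displayed equality.
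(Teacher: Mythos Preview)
Your proposal is correct and is essentially the same argument as the paper's, just packaged more transparently: you introduce the total weight grading (with $V$ in weight $1$ and $\bigwedge^k V$ in weight $k$) and observe that $\partial$ is homogeneous of weight zero, whereas the paper relies on the preceding lemma, which records the equivalent fact that $\partial$ sends the $j$th summand to the $(j+1)$st, together with the remark that summands of $\bigwedge^{(i+1)k-i}\mathfrak{n}^k_2$ with fewer than $k$ factors from $V$ die under $\partial$. Both amount to the same subcomplex decomposition followed by ``homology commutes with direct sums''; your weight formulation simply makes the indexing reconciliation you flag at the end automatic.
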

\begin{proof}
Cycles of the form $\bigwedge^mV\otimes\bigwedge^{(i+1)k-i-m}\left(\bigwedge^kV\right)$ maps to zero when $0\leq m<k$, so they are not in the image. The nonzero indices are same both boundary maps $\partial_{ik-(i-1)}$ and $\partial_{(i+1)k-i}$ in the homology.
\end{proof}

\begin{corollary} Betti rank is :
\begin{gather}
\left|H^{ik-(i-1)}\left(\mathfrak{n}^k_2\right)\right|=\sum\limits^{ik-(i-1)}_{j=0}\left|\ker\partial^j_{ik-(i-1)}\right|-\left|\im\partial^j_{(i+1)k-i}\right|
\end{gather}
\end{corollary}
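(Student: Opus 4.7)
The plan is to deduce this corollary essentially for free from the preceding Proposition \ref{homologydecomposition}. That proposition already exhibits the homology as a direct sum over $j$ of the quotients $\ker\partial^j_{ik-(i-1)}/\im\partial^j_{(i+1)k-i}$, so the only work left is to take dimensions and exploit two standard facts: dimension is additive on direct sums of finite dimensional vector spaces, and for any linear map between subquotients, the dimension of the quotient $\ker/\im$ is $\dim\ker-\dim\im$ (where we use that $\im\partial^j_{(i+1)k-i}\subseteq\ker\partial^j_{ik-(i-1)}$, which is inherited from $\partial^2=0$ together with the degree-preservation of the restrictions to the $j$-th graded piece).

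Concretely, I would first invoke Proposition \ref{homologydecomposition} to write
\[
H^{ik-(i-1)}(\mathfrak{n}^k_2)=\bigoplus_{j=0}^{ik-(i-1)}\frac{\ker\partial^j_{ik-(i-1)}}{\im\partial^j_{(i+1)k-i}}.
\]
Then take complex dimension on both sides. Additivity of $\dim$ over direct sums yields
\[
\bigl|H^{ik-(i-1)}(\mathfrak{n}^k_2)\bigr|=\sum_{j=0}^{ik-(i-1)}\dim\!\left(\frac{\ker\partial^j_{ik-(i-1)}}{\im\partial^j_{(i+1)k-i}}\right),
\]
and each summand equals $|\ker\partial^j_{ik-(i-1)}|-|\im\partial^j_{(i+1)k-i}|$ since the denominator sits inside the numerator as a subspace (so the quotient is finite dimensional of the stated dimension). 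Substituting gives exactly the claimed identity.

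There is essentially no obstacle here: the only subtlety to check is that $\im\partial^j_{(i+1)k-i}$ really does land inside $\ker\partial^j_{ik-(i-1)}$, i.e.\ that the complex splits into subcomplexes indexed by the grading $j$. This is immediate from the preceding lemma that $\partial$ preserves the $j$-grading, combined with $\partial^2=0$ restricted to each graded component. Hence the corollary follows without additional combinatorial or representation-theoretic input.
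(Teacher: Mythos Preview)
Your proposal is correct and matches the paper's (implicit) approach: the corollary is stated without proof in the paper because it follows immediately from Proposition~\ref{homologydecomposition} by taking dimensions, exactly as you describe. There is nothing to add.
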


\begin{proposition}
\begin{gather}
\left|H^{ik-(i-1)}\left(\mathfrak{n}^k_2\right)\right|=\sum\limits^{ik-(i-1)}_{j=0}\left|\ker\partial^j_{ik-(i-1)}\right|-\left|\im\partial^j_{(i+1)k-i}\right|\\
\geq \sum\limits^{(i-1)(k-1)}_{j=1}\left|\ker\partial^j_{ik-(i-1)}\right|-\left|\im\partial^j_{(i+1)k-i}\right|
\end{gather}
\end{proposition}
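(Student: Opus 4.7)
The plan is to use the graded decomposition of the boundary map established in Proposition~\ref{homologydecomposition} together with the fact that each graded piece of the homology is a genuine vector space, hence of non-negative dimension. The corollary preceding this proposition already identifies the Betti rank with the full sum
$$\sum_{j=0}^{ik-(i-1)} \bigl(|\ker\partial^j_{ik-(i-1)}|-|\im\partial^j_{(i+1)k-i}|\bigr),$$
so all that remains is to argue that truncating to $1\le j\le (i-1)(k-1)$ discards only non-negative contributions.

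First I would invoke the direct-sum decomposition
$$H^{ik-(i-1)}(\mathfrak{n}^k_2)=\bigoplus_{j=0}^{ik-(i-1)}\frac{\ker\partial^j_{ik-(i-1)}}{\im\partial^j_{(i+1)k-i}}$$
from Proposition~\ref{homologydecomposition}, which itself rests on the preceding two lemmas describing the Cauchy decomposition of $\bigwedge^{ik-(i-1)}\mathfrak{n}^k_2$ and the degree-preserving behaviour of $\partial$. Because each summand on the right is an honest quotient of vector spaces, its dimension $|\ker\partial^j_{ik-(i-1)}|-|\im\partial^j_{(i+1)k-i}|$ is non-negative for every $j\in\{0,1,\dots,ik-(i-1)\}$. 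Consequently, omitting the term $j=0$ and the terms with $j>(i-1)(k-1)$ can only decrease the sum, which gives the stated inequality.

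The only point requiring care is purely notational: $\im\partial^j_{(i+1)k-i}$ refers to the portion of $\im\partial_{(i+1)k-i}$ that lands in the $j$-th graded piece of $\bigwedge^{ik-(i-1)}\mathfrak{n}^k_2$, coming from the $(j-1)$-st graded piece of the source via the bracket. The fact that this is well-defined summand-by-summand is exactly the content of the degree-preservation lemma, so the homology splits compatibly with the decomposition and the non-negativity of each graded term follows. There is no genuine obstacle here: the proposition is a bookkeeping estimate whose utility lies in the fact that the graded pieces indexed by $1\le j\le (i-1)(k-1)$ are precisely the ones admitting a tractable description via Littlewood-Richardson and plethysm computations, which is how the lower bound will be converted into explicit Schur-module estimates in the sequel.
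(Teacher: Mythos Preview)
Your argument is correct and, for the bare inequality, cleaner than the paper's. Both proofs rest on the direct-sum decomposition of Proposition~\ref{homologydecomposition} and the fact that each graded homology piece has non-negative dimension; you simply invoke this non-negativity uniformly and drop the terms outside the range $1\le j\le (i-1)(k-1)$.

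The paper's version records a little more structure along the way. For $j>(i-1)(k-1)$ it observes that $ik-(i-1)-j<k$, so the $V$-factor is too small to feed a $k$-bracket and the entire graded piece lies in $\ker\partial^j_{ik-(i-1)}$; for $j=0$ it notes, via Pieri, that $\bigwedge^{ik-(i-1)}V$ embeds in $\bigwedge^{(i-1)(k-1)}V\otimes\bigwedge^{k}V$, so the $GL(V)$-equivariant map $\partial^{0}_{ik-(i-1)}$ is injective and that term contributes exactly zero. Neither of these observations is needed for the inequality itself, but they explain why the truncation window is chosen as it is and they feed directly into the explicit lower-bound computations in Theorem~\ref{thmLowerbound}. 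Your proof establishes the proposition; the paper's proof establishes it and simultaneously sets up the next step.
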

\begin{proof}
Cycles of the form 
\begin{align}
\bigwedge^{ik-(i-1)-j}V\otimes\bigwedge^{j}\left(\bigwedge^{k}V\right) 
\end{align} with $(i-1)(k-1)<j$ are in the kernel of $\partial^j_{ik-(i-1)}$. Hence, if we ignore their contribution to homology, we get smaller rank. \\
Also, $\bigwedge^{ik-(i-1)}V$ is always a submodule of $\bigwedge^{(i-1)k-(i-1)}V\otimes\bigwedge^kV$ by Pieri's rule. This gives the claimed lower bound. 
\end{proof}

\begin{lemma}\label{pieri1}$\bigwedge^kV\otimes\bigwedge^{\alpha}\left(\bigwedge^kV\right)\cong \bigwedge^{\alpha+1}\left(\bigwedge^kV\right)\bigoplus S_{21^{\alpha-1}}\left(\bigwedge^kV\right)$
\end{lemma}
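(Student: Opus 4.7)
The plan is to recognize this as a direct application of Pieri's rule, applied to the vector space $W := \bigwedge^k V$ (temporarily forgetting its internal $V$-structure and treating $W$ as the defining representation of $GL(W)$). Under this identification, $\bigwedge^\alpha(\bigwedge^k V) = \bigwedge^\alpha W = S_{(1^\alpha)} W$ and $\bigwedge^k V = W = S_{(1)} W$, so the left-hand side is $S_{(1)} W \otimes S_{(1^\alpha)} W$.

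Pieri's rule (see e.g.\ the reference \cite{landsberg} already cited in the paper) states that for any partition $\mu$,
\begin{equation*}
S_{(1)} W \otimes S_\mu W \;\cong\; \bigoplus_\lambda S_\lambda W,
\end{equation*}
where the sum is over partitions $\lambda$ obtained from $\mu$ by adding exactly one box. Starting from $\mu = (1^\alpha)$, the only legal places to add a single box are (i) the bottom of the single column, producing $\lambda = (1^{\alpha+1})$, or (ii) the right of the top box, producing $\lambda = (2,1^{\alpha-1})$. No other additions yield a valid Young diagram. Therefore
\begin{equation*}
W \otimes \bigwedge^\alpha W \;\cong\; S_{(1^{\alpha+1})} W \,\oplus\, S_{(2,1^{\alpha-1})} W \;=\; \bigwedge^{\alpha+1} W \,\oplus\, S_{2,1^{\alpha-1}} W,
\end{equation*}
which, once we translate back via $W = \bigwedge^k V$, is exactly the claimed isomorphism.

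There is essentially no obstacle here beyond invoking the correct rule; the only thing to be careful about is bookkeeping in the edge case $\alpha = 1$, where $S_{21^{\alpha-1}} W = S_{(2)} W = \mathrm{Sym}^2 W$ and the formula reduces to the standard decomposition $W \otimes W = \bigwedge^2 W \oplus \mathrm{Sym}^2 W$, which is consistent. No genuine plethystic computation is needed, since $W$ appears in the Pieri rule only as the tautological representation of $GL(W)$ — we never need to decompose $\bigwedge^k V$ further as a $GL(V)$-module to establish this lemma.
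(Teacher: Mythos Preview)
Your proof is correct and follows essentially the same approach as the paper: set $W=\bigwedge^kV$, apply Pieri's rule to $W\otimes\bigwedge^{\alpha}W$, and substitute back. In fact, the paper's proof only writes out the special case $\alpha=2$ before making the substitution, so your version, which handles general $\alpha$ and checks the edge case $\alpha=1$, is if anything more complete.
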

\begin{proof}
By Pieri's rule, $W\otimes\bigwedge^2W\cong \bigwedge^3W\oplus S_{21}W$. Replacing $W$ by $\bigwedge^kV$ proves the lemma.
\end{proof}

\begin{theorem}\label{thmLowerbound}
The rank of the homology at $ik-(i-1)$, $i\geq 2$, $H^{ik-(i-1)}\left(\mathfrak{n}^k_2\right)$ is bounded below by
\begin{gather}
\left|H^{ik-(i-1)}\left(\mathfrak{n}^k_2\right)\right|\geq{n\choose k}{x\choose (i-1)(k-1)}-{n\choose 2k}{x\choose (i-1)(k-1)-1}-{n\choose 0}{x\choose (i-1)(k-1)+1}\nonumber
\end{gather}
where $n$ is the dimension of $V$.
\end{theorem}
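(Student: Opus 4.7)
The plan is to apply Proposition \ref{homologydecomposition}, keep only one summand of the $j$-decomposition, and bound that summand from below by elementary rank-nullity. The privileged index will be $j_{0}=(i-1)(k-1)$; this is the value of $j$ at which the $V$-exterior exponent of the target of $\partial^{j}_{ik-(i-1)}$ drops to $0$, so the target collapses to a pure exterior power of $\bigwedge^{k}V$ and the binomial arithmetic simplifies cleanly.

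First I would observe that, since every summand in the decomposition of Proposition \ref{homologydecomposition} contributes a nonnegative dimension,
\[
|H^{ik-(i-1)}(\mathfrak{n}^{k}_{2})|\;\geq\;|\ker\partial^{j_{0}}_{ik-(i-1)}|-|\im\partial^{j_{0}-1}_{(i+1)k-i}|,
\]
where the incoming piece into the $j_{0}$ slot comes from the $(j_{0}-1)$-graded component of $\partial_{(i+1)k-i}$. Next I would unwind the formula for $\partial^{j}$ stated just before Proposition \ref{homologydecomposition} and work through the routine index arithmetic at $j_{0}=(i-1)(k-1)$. Writing $x=\binom{n}{k}$, this yields: source of $\partial^{j_{0}}_{ik-(i-1)}$ equal to $\bigwedge^{k}V\otimes\bigwedge^{(i-1)(k-1)}(\bigwedge^{k}V)$ of dimension $\binom{n}{k}\binom{x}{(i-1)(k-1)}$; target equal to $\bigwedge^{0}V\otimes\bigwedge^{(i-1)(k-1)+1}(\bigwedge^{k}V)$ of dimension $\binom{n}{0}\binom{x}{(i-1)(k-1)+1}$; and source of $\partial^{j_{0}-1}_{(i+1)k-i}$ equal to $\bigwedge^{2k}V\otimes\bigwedge^{(i-1)(k-1)-1}(\bigwedge^{k}V)$ of dimension $\binom{n}{2k}\binom{x}{(i-1)(k-1)-1}$.

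Then I would apply the elementary inequalities
\[
|\ker\partial^{j_{0}}_{ik-(i-1)}|\;\geq\;\dim(\text{source})-\dim(\text{target}),\qquad |\im\partial^{j_{0}-1}_{(i+1)k-i}|\;\leq\;\dim(\text{source}),
\]
and substitute the three dimensions from the previous paragraph. The result is exactly the claimed lower bound $\binom{n}{k}\binom{x}{(i-1)(k-1)}-\binom{n}{2k}\binom{x}{(i-1)(k-1)-1}-\binom{n}{0}\binom{x}{(i-1)(k-1)+1}$.

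There is no deep obstacle here; the one non-mechanical step is choosing $j_{0}$, which one spots by asking at what value of $j$ the target $V$-exponent first vanishes. Sharper bounds would demand Schur-module control of each restricted boundary, which in turn requires the plethysms $\bigwedge^{j}(\bigwedge^{k}V)$ in full generality — precisely the data that is unavailable — so this rank-nullity estimate is essentially the best lower bound one can extract from Proposition \ref{homologydecomposition} without further input.
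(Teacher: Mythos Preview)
Your argument is correct and singles out the same graded slice $j_{0}=(i-1)(k-1)$ as the paper does; the three-term sequence you write down is exactly the paper's sequence \ref{h1}, and the numerical bound is the same Euler-characteristic estimate $\dim B-\dim A-\dim C$ for a complex $A\to B\to C$. The one difference is that the paper inserts Lemma~\ref{pieri1} to split the middle term as $\bigwedge^{(i-1)(k-1)+1}(\bigwedge^{k}V)\oplus S_{21^{(i-1)(k-1)-1}}(\bigwedge^{k}V)$ and thereby identifies a concrete Schur submodule of the homology (the cokernel of the left map restricted to the $S_{\lambda}$ summand); this buys structural information beyond the bare inequality, but for the numerical statement of the theorem your direct rank-nullity route is sufficient and slightly cleaner.
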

\begin{proof}
By proposition \ref{homologydecomposition}, homology of the sequence :

\begin{gather}
\label{h1}\bigwedge^{2k}V\otimes\bigwedge^{(i-1)(k-1)-1}\left(\bigwedge^kV\right)\longrightarrow \bigwedge^{k}V\otimes\bigwedge^{(i-1)(k-1)}\left(\bigwedge^k V\right)\longrightarrow\bigwedge^{(i-1)(k-1)+1}\left(\bigwedge^k V\right)
\end{gather}
contributes to the homology of the complex \ref{c2}.
By the lemma \ref{pieri1}, the middle term decomposes as:
\begin{gather}
\bigwedge^{2k}V\otimes\bigwedge^{(i-1)(k-1)-1}\left(\bigwedge^kV\right)\longrightarrow S_{\lambda}\left(\bigwedge^kV\right)\bigoplus \bigwedge^{(i-1)(k-1)+1}\left(\bigwedge^k V\right)\longrightarrow\bigwedge^{(i-1)(k-1)+1}\left(\bigwedge^k V\right)
\end{gather}
where $\lambda=\left(21^{(i-1)(k-1)-1}\right)$. Therefore homology $H^{ik-(i-1)}\mathfrak{n}^k_2$ contains 
\begin{gather}
\coker\left(\bigwedge^{2k}V\otimes\bigwedge^{(i-1)k-(i)}V\longrightarrow S_{21^{(i-1)k-i}}V\right)
\end{gather}
as submodule. In particular, homology of the sequence \ref{h1} is bounded by 
\begin{gather}
{n\choose k}{x\choose (i-1)(k-1)}-{n\choose 2k}{x\choose (i-1)(k-1)-1}-{n\choose 0}{x\choose (i-1)(k-1)+1}\nonumber
\end{gather}
Claim follows.
\end{proof}

\begin{proposition} The lower bound in theorem \ref{thmLowerbound} is asymptotically
\begin{align}
\cfrac{e^{2k}n^{2k}}{2k^{2k}}\left(\cfrac{1}{\pi k}-\cfrac{(\alpha+1)}{\sqrt{\pi k}}\cfrac{1}{2^{2k}}\right){x\choose \alpha}\cfrac{1}{(\alpha+1)(x-\alpha+1)}
\end{align}
with $\alpha=(i-1)(k-1)$
\end{proposition}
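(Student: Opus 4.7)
The statement is an asymptotic evaluation of the lower bound
\[
L \;=\; \binom{n}{k}\binom{x}{\alpha} - \binom{n}{2k}\binom{x}{\alpha-1} - \binom{x}{\alpha+1}
\]
of Theorem~\ref{thmLowerbound}, with $x=\binom{n}{k}$ and $\alpha=(i-1)(k-1)$. The plan is to carry it out in two stages: an exact factorization of $L$ into a product that already has the shape of the proposition, followed by Stirling's approximation of the surviving binomials in $n$.

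In the first stage I would pull $\binom{x}{\alpha}$ out of every summand using the standard ratios
\[
\binom{x}{\alpha-1} \;=\; \binom{x}{\alpha}\frac{\alpha}{x-\alpha+1}, \qquad \binom{x}{\alpha+1} \;=\; \binom{x}{\alpha}\frac{x-\alpha}{\alpha+1},
\]
and use $\binom{n}{k}=x$. Combining the first and third summands via the elementary identity $x(\alpha+1)-(x-\alpha)=\alpha(x+1)$ and clearing the common denominator $(\alpha+1)(x-\alpha+1)$ yields
\[
L \;=\; \frac{\alpha\,\binom{x}{\alpha}}{(\alpha+1)(x-\alpha+1)}\Bigl[(x+1)(x-\alpha+1)-(\alpha+1)\binom{n}{2k}\Bigr],
\]
which already carries the factor $\binom{x}{\alpha}/((\alpha+1)(x-\alpha+1))$ of the target expression.

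In the second stage I would fix $k$ and $i$ and let $n\to\infty$. The polynomial factor $(x+1)(x-\alpha+1)$ is $x^{2}(1+O(1/x))=\binom{n}{k}^{2}(1+o(1))$, so the bracket reduces to $\binom{n}{k}^{2}-(\alpha+1)\binom{n}{2k}$ to leading order. Applying Stirling's formula $m!\sim\sqrt{2\pi m}(m/e)^{m}$ to $k!$ and to $(2k)!$ gives
\[
\binom{n}{k}^{2}\;\sim\;\frac{e^{2k}\,n^{2k}}{2\pi k\cdot k^{2k}}, \qquad \binom{n}{2k}\;\sim\;\frac{e^{2k}\,n^{2k}}{2\sqrt{\pi k}\,\,2^{2k}\,k^{2k}},
\]
and pulling the common factor $e^{2k}n^{2k}/(2k^{2k})$ out of the difference reproduces exactly the parenthesized constant $\tfrac{1}{\pi k}-\tfrac{\alpha+1}{\sqrt{\pi k}\,2^{2k}}$ displayed in the proposition.

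Nothing in either stage uses anything more substantial than elementary binomial identities and Stirling, so I expect the main obstacle to be purely bookkeeping: verifying that the subleading errors --- both the $O(1/x)$ correction arising from $(x+1)(x-\alpha+1)\sim x^{2}$ and the Stirling correction --- are uniformly negligible in the $n\to\infty$ limit with $k,i$ held fixed, and matching the numerical prefactors (in particular the factor of $\alpha$ that surfaces in the factorization of $L$) against the constant written in the proposition.
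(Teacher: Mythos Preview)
Your approach is essentially the same as the paper's: factor out $\binom{x}{\alpha}$ via the two ratio identities, combine the first and third terms to obtain the quadratic bracket $(x+1)(x-\alpha+1)-(\alpha+1)\binom{n}{2k}=x^{2}+(2-\alpha)x+1-\alpha-(\alpha+1)\binom{n}{2k}$, and then feed Stirling into $\binom{n}{k}^{2}$ and $\binom{n}{2k}$ to extract the leading constant. Your observation about the stray factor $\alpha$ is correct: the paper's own derivation produces exactly your factor $\alpha/((\alpha+1)(x-\alpha+1))$, but the $\alpha$ is silently dropped when the final asymptotic expression in the proposition is written down, so the discrepancy you flagged is in the statement rather than in your argument.
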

\begin{proof}
Assume that $n$ is large than $k$. Then we can use the approximation:
\begin{gather}
{n\choose k}\approx \cfrac{e^k}{\sqrt{2\pi k}}\left(\cfrac{n}{k}-\cfrac{1}{2}\right)^k
\end{gather}
We start with the bound in the Theorem \ref{thmLowerbound}, and simplify:
\begin{gather}
{x\choose \alpha}{n\choose k}-{x\choose \alpha-1}{n\choose 2k}-{x\choose \alpha+1}{n\choose 0}=\\
{x\choose \alpha}{n\choose k}-\cfrac{\alpha}{x-\alpha+1}{x\choose \alpha}{n\choose 2k}-\cfrac{x-\alpha}{\alpha+1}{x\choose \alpha}{n\choose 0}
\end{gather}
We approximate the coefficient:
\begin{gather}
\cfrac{\alpha}{\alpha+1}x+\cfrac{\alpha}{\alpha+1}-{n\choose 2k}\cfrac{\alpha}{x-\alpha+1}\implies\\
x^2+(2-\alpha)x+1-\alpha-(\alpha+1){n\choose 2k}\approx\\
\cfrac{e^{2k}}{2\pi k}\left(\cfrac{n}{k}-\cfrac{1}{2}\right)^{2k}+(2-\alpha)\cfrac{e^k}{\sqrt{2\pi k}}\left(\cfrac{n}{k}-\cfrac{1}{2}\right)^k+1-\alpha-(\alpha+1)\cfrac{e^{2k}}{\sqrt{4\pi k}}\left(\cfrac{n}{2k}-\cfrac{1}{2}\right)^{2k}
\end{gather}
\text{The leading term is}
\begin{gather}
\cfrac{e^{2k}n^{2k}}{2k^{2k}}\left(\cfrac{1}{\pi k}-\cfrac{(\alpha+1)}{\sqrt{\pi k}}\cfrac{1}{2^{2k}}\right)
\end{gather}
Therefore lower bound asymptotically is

\begin{align}
\cfrac{e^{2k}n^{2k}}{2k^{2k}}\left(\cfrac{1}{\pi k}-\cfrac{(\alpha+1)}{\sqrt{\pi k}}\cfrac{1}{2^{2k}}\right){x\choose \alpha}\cfrac{1}{(\alpha+1)(x-\alpha+1)}
\end{align}
In particular, positivity of the leading term is because of $2^{2k}\geq ki\sqrt{\pi k}$. This finishes the proof.
\end{proof}

For the second homology, we can track another approach.
\begin{proposition}\label{prop2}
 The second homology of free 2-step nilpotent k-ary Lie algebra always contains 
\begin{gather}
 \bigoplus\limits^{k-1}_{j=1}S_{2^j1^{2k-2j+1}}V \subset H^{k}\left(\mathfrak{n}^k_2\right)
\end{gather}
\end{proposition}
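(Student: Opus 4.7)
The plan is to extract the claimed Schur modules from a single graded component of $\bigwedge^k \mathfrak{n}_2^k$, using Pieri's rule and Schur's lemma to pin down precisely which isotypic piece is killed by the incoming differential.

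By Proposition \ref{homologydecomposition}, the homology $H^k(\mathfrak{n}_2^k)$ splits as a direct sum over the grading $j$ counting tensor factors in $\bigwedge^k V$, so it is enough to analyze a single grading. I focus on $j=1$, where the cycle space is the whole of $\bigwedge^{k-1}V \otimes \bigwedge^k V$: the differential $\partial_k$ needs $k$ inputs from $V$ to produce a nonzero bracket, while only $k-1$ vectors from $V$ are available in this piece, so $\partial_k$ restricted to grading $1$ is identically zero.

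Next, Pieri's rule gives
$$\bigwedge^{k-1}V \otimes \bigwedge^k V \;\cong\; \bigoplus_{t=0}^{k-1} S_{(2^t,\,1^{2k-1-2t})}V,$$
each irreducible appearing with multiplicity one, as one sees by adding a vertical strip of size $k$ to the column shape $(1^{k-1})$. The $k-1$ summands with $t\geq 1$ are exactly the hook-type Schur modules asserted in the claim; the extra $t=0$ summand is $S_{(1^{2k-1})}V = \bigwedge^{2k-1}V$.

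The remaining task is to compute the image of the restricted differential $\partial^0_{2k-1}\colon \bigwedge^{2k-1}V \to \bigwedge^{k-1}V \otimes \bigwedge^k V$. Its source is the irreducible $GL(V)$-module $S_{(1^{2k-1})}V$, and in the target this irreducible appears with multiplicity one (namely the $t=0$ summand), so by Schur's lemma the image is either zero or precisely that copy. Evaluating the formula \ref{defdiff} for $\partial$ on a decomposable $v_1\wedge\cdots\wedge v_{2k-1}$ with linearly independent $v_i$ produces a visibly nonzero tensor, ruling out the zero map. Hence $\partial^0_{2k-1}$ injects onto $S_{(1^{2k-1})}V$, and quotienting it out of the cycle space at grading $1$ leaves exactly the claimed sum of hook modules inside $H^k(\mathfrak{n}_2^k)$. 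The only delicate step is the nonvanishing of $\partial^0_{2k-1}$, which reduces to a short linear-independence check; everything else is formal representation theory.
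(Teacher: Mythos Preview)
Your proof is correct and follows essentially the same route as the paper: isolate the graded piece $\bigwedge^{k-1}V\otimes\bigwedge^k V$ inside $\bigwedge^k\mathfrak{n}^k_2$, observe it lies entirely in $\ker\partial_k$, identify the incoming image from $\bigwedge^{2k-1}V$, and read off the quotient via Pieri. The paper simply asserts the quotient $\bigl(\bigwedge^{k-1}V\otimes\bigwedge^kV\bigr)\big/\bigwedge^{2k-1}V$ without further comment, whereas you supply the extra justification (Schur's lemma plus a nonvanishing check) that the image of $\partial^0_{2k-1}$ is exactly the $t=0$ summand; this is a welcome elaboration rather than a different argument.
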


\begin{proof}
By proposition \ref{homologydecomposition}, the second homology i.e. $i=1$ contains $\cfrac{\ker\partial^0_{ik-(i-1)}}{\im\partial^0_{(i+1)k-i}}$, which is :
\begin{gather}
 \cfrac{\bigwedge^{k-1}V\otimes\bigwedge^{k}V}{\bigwedge^{2k-1}V}= \bigoplus\limits^{k-1}_{j=1}S_{2^j1^{2k-2j-1}}V 
\end{gather}
\end{proof}

\begin{corollary} The second homology of free 2-step nilpotent $k$-ary Lie algebra satisfies:
\begin{align}
\vert H^{(k)}\left(\mathfrak{n}^k_2\right)\vert\geq {n\choose k}{n\choose k-1}-{n\choose 2k-1}
\end{align}
\end{corollary}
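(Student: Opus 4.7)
The plan is to read this corollary as nothing more than a dimension count attached to Proposition \ref{prop2}. That proposition already exhibits $\bigoplus_{j=1}^{k-1} S_{2^{j}1^{2k-2j-1}}V$ as a subrepresentation of $H^k(\mathfrak{n}^k_2)$, and its proof actually identifies this subrepresentation with the quotient $\left(\bigwedge^{k-1}V\otimes\bigwedge^kV\right)\big/\bigwedge^{2k-1}V$. So the only remaining task is to take dimensions.

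First, I would record explicitly the $j=0$ piece in the decomposition of Proposition \ref{homologydecomposition} applied to $i=1$: it contributes $\ker\partial^0_k/\im\partial^0_{2k-1}$ to $H^k(\mathfrak{n}^k_2)$. Because the bracket of $\mathfrak{n}^k_2$ vanishes as soon as any entry lies in the center $\bigwedge^kV$, the restriction $\partial^0_k$ on $\bigwedge^{k-1}V\otimes\bigwedge^kV$ is identically zero, so $\ker\partial^0_k=\bigwedge^{k-1}V\otimes\bigwedge^kV$. Meanwhile $\partial^0_{2k-1}\colon \bigwedge^{2k-1}V\to\bigwedge^{k-1}V\otimes\bigwedge^kV$ is, up to sign, the standard comultiplication dual to the wedge product, and by Pieri it is injective (the target contains $\bigwedge^{2k-1}V=S_{1^{2k-1}}V$ with multiplicity one).

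Second, I would compute
\begin{align*}
\dim\left(\bigwedge^{k-1}V\otimes\bigwedge^kV\right)&={n\choose k-1}{n\choose k}, &
\dim\bigwedge^{2k-1}V&={n\choose 2k-1},
\end{align*}
so the cokernel has dimension $\binom{n}{k}\binom{n}{k-1}-\binom{n}{2k-1}$. Since this cokernel embeds into $H^k(\mathfrak{n}^k_2)$ as a $GL(V)$-submodule, passing to $\Cc$-dimensions yields the claimed inequality.

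There is no real obstruction; the structural work is already carried out in Proposition \ref{prop2}, and what remains is bookkeeping with binomial coefficients. The one thing worth being careful about is making sure that the Schur summands identified in Proposition \ref{prop2} really do add up to the full quotient (not merely inject into it), which is exactly the Pieri decomposition $\bigwedge^{k-1}V\otimes\bigwedge^kV\cong\bigoplus_{j=0}^{k-1}S_{2^{j}1^{2k-2j-1}}V$, with the $j=0$ summand being $\bigwedge^{2k-1}V$ that we quotient out.
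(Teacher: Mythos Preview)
Your proposal is correct and matches the paper's (implicit) argument: the corollary is stated without proof immediately after Proposition~\ref{prop2}, and your dimension count of $(\bigwedge^{k-1}V\otimes\bigwedge^{k}V)/\bigwedge^{2k-1}V$ is exactly the intended step. One minor labeling slip: the piece $\bigwedge^{k-1}V\otimes\bigwedge^{k}V$ carries one factor from the center, so in the grading of Proposition~\ref{homologydecomposition} it is the $j=1$ summand rather than $j=0$---though the paper's own proof of Proposition~\ref{prop2} commits the same slip.
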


\subsection{Representation stability}\label{sectionRep}

Representation stability is currently an active field and find wide range of applications in many branches including Lie algebra homology. For example, homologies of free nilpotent Lie algebras are representation stable \cite{church2013representation}, which follows representation stability of free Lie algebras. Multiplicity of irreducible representations in \ref{freelie} is given by the formula:
\begin{center}
$c_{\lambda}=\cfrac{1}{n}\sum\limits_{d\vert n}\mu(d)\chi_{\lambda}(\tau^{n/d})$
\end{center}
where $\chi_{\lambda}$ is the character of $\mathfrak{S}_n$-representation $\lambda$ and $\tau$ is the cycle $(12\ldots n)$ \cite{klyachko1974lie}.
For free k-ary Lie algebras, we do not have such a formula yet, since which representations occur in it is still an open problem \cite{friedmann2017generalization}.

\begin{theorem}
 Homologies of free nilpotent k-ary Lie algebras are representation stable.
\end{theorem}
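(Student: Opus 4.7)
The approach is to exhibit the Chevalley-Eilenberg complex $\bigwedge^\bullet \mathfrak{n}(V)$ of the free $d$-step nilpotent k-ary Lie algebra $\mathfrak{n}(V)$ as a complex of polynomial functors in $V$ with $GL(V)$-equivariant differentials. Representation stability of the homology will then follow from the standard fact that any polynomial functor decomposes uniquely into Schur functors $S_\lambda$, with multiplicities $c_\lambda$ that are intrinsic invariants of the functor, independent of $\dim V$; only the summands with $\ell(\lambda) \leq \dim V$ survive.

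First I would check that $\mathfrak{n}(V)$ itself is a polynomial functor of $V$. The free k-ary Lie algebra $F^k(V)$ is graded by weight, with weight-$i$ component a $GL(V)$-submodule of $V^{\otimes i}$ carved out by antisymmetry and the generalized Jacobi identity; each such component is therefore a polynomial functor of degree $i$. Truncating this grading at weight $d$ gives the free $d$-step nilpotent object as $\bigoplus_{i=1}^d F^k(V)_i$, still a polynomial functor. Exterior powers and direct sums preserve polynomiality, so each term $\bigwedge^p \mathfrak{n}(V)$ of the CE complex is a polynomial functor of $V$. Next I would verify that the differential $\partial$ is a natural transformation between such functors, which amounts to $GL(V)$-equivariance; this is immediate from the definition of $\partial$ combined with the naturality of the bracket, exactly as in the lemma proved earlier in this section for $\mathfrak{n}^k_2$. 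Since kernels and images of natural transformations of polynomial functors are themselves polynomial sub-functors, every $H^p(\mathfrak{n}(V))$ is a polynomial functor of $V$.

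To conclude I would invoke the canonical decomposition of any polynomial functor $F$ as $F(V) = \bigoplus_\lambda c_\lambda \, S_\lambda(V)$, where the multiplicities $c_\lambda \in \mathbb{Z}_{\geq 0}$ are independent of $V$ and $S_\lambda V$ vanishes precisely when $\ell(\lambda) > \dim V$. Applied to $F = H^p(\mathfrak{n}(-))$ this says that for each fixed partition $\lambda$ the multiplicity of $S_\lambda V$ in $H^p(\mathfrak{n}(V))$ becomes constant once $\dim V \geq \ell(\lambda)$, which is exactly representation stability in the $GL_n$-sense of Church-Farb.

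The main obstacle is the first step: genuinely verifying that $F^k(V)$, and hence its $d$-step nilpotent quotient, is a polynomial functor of $V$ rather than merely a $GL(V)$-module for each individual $V$. The cleanest argument uses the universal property, from which $V \mapsto F^k(V)$ is a left adjoint to the forgetful functor and therefore natural in linear maps $V \to V'$; combined with the spanning of weight-$i$ elements by $i$-fold iterated brackets of elements of $V$ (a polynomial operation of degree $i$), polynomiality follows. Everything after that step is formal.
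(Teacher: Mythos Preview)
Your argument is correct and conceptually clean, but it follows a genuinely different route from the paper. The paper opts for a direct comparison argument in the style of \cite{tirao2002homology}: it fixes the embedding $\Cc^n\hookrightarrow\Cc^{n+1}$, checks that this induces compatible inclusions of free $k$-ary Lie algebras, their $s$-step nilpotent quotients, and the terms of the Chevalley--Eilenberg complex, and then argues isotypic-component-by-isotypic-component that a nonzero class $[\lambda]\in H^{\alpha}(\mathfrak{n}^k_s(\Cc^n))$ survives in $H^{\alpha}(\mathfrak{n}^k_s(\Cc^{n+1}))$ and conversely that any bounding chain for it in the larger complex restricts back. Your approach instead packages the whole situation as a complex of polynomial functors in $V$ with natural differentials, and then invokes the structural fact that kernels, images, and hence homology of such a complex are again polynomial functors, whose Schur-decomposition multiplicities are intrinsic and independent of $\dim V$. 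What your method buys is generality and brevity: once polynomiality of $F^k(V)$ is established (and you correctly flag this as the only nontrivial step), stability is automatic and no chain-level chasing is needed. What the paper's method buys is self-containment: it avoids appealing to the theory of polynomial functors and Schur--Weyl type decompositions, at the cost of a somewhat sketchier argument about restricting bounding chains. Either approach is adequate here; yours is arguably the more robust one, and is closer in spirit to the Church--Farb framework the paper cites.
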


\begin{proof}
 
Instead of representation stability techniques\footnotemark, we sketch direct arguments of \cite{tirao2002homology}\footnotetext{Representation stability of free k-ary Lie algebras will be a separate work}. Let $\Cc^n\hookrightarrow\Cc^{n+1}$ be an embedding. This induces the following inclusions:
\begin{enumerate}[label=\roman*)]
\item Free k-ary Lie algebras: $F^{(k)}(\Cc^n)\hookrightarrow F^{(k)}(\Cc^{n+1})$
\item Free s-step nilpotent k-ary Lie algebras $\mathfrak{n}^k_s(\Cc^n)\hookrightarrow\mathfrak{n}^k_s(\Cc^{n+1})$
\item Terms in the complex \ref{defcomplex}: $\bigwedge^{\alpha}\mathfrak{n}^k_s(\Cc^n)\hookrightarrow\bigwedge^{\alpha}\mathfrak{n}^k_s(\Cc^{n+1})$
\item In particular, $GL_n\mapsto GL_{n+1}$ via $A\rightarrow \begin{bmatrix}
A&0\\
0&1
\end{bmatrix}$
\end{enumerate}
We denote $\mathfrak{n}^k_s(\Cc^n)$ and $\mathfrak{n}^k_s(\Cc^{n+1})$ by $\mathfrak{n}_1$ and $\mathfrak{n}_2$ respectively. We compare homology $H^{\alpha}(n_1)$ and $H^{\alpha}(n_2)$.
If nonzero $[\lambda]\in H^{\alpha}(n_1)$, then $[\lambda]\in H^{\alpha}(n_2)$, since $[\lambda]$ is in the kernel of $\partial^2_{\alpha}$ via Young diagram $\left(\lambda,0\right)$. If $[\lambda]$ is in the image of $\partial^2_{\alpha+k-1}$ i.e. $[\lambda]=\partial^2_{\alpha+k-1}(\omega)$, then $\omega=[\left(\lambda,0\right)]$ by using $GL$ morphisms and in particular $\omega\in\bigwedge^{\alpha+k-1}n_2$. The restriction of $\omega$ gives either zero or $[\lambda]$. Therefore for large enough dimensions, $H^{\alpha}(n_1)$ is equal to $H^{\alpha}(n_2)$, which implies representation stability.
\end{proof}

\begin{example}
The second homology of $\mathfrak{n}^3_2$ is representation stable:
\begin{itemize}
\item $\dim V=3$, $H^{(3)}\left(\mathfrak{n}^3_2\right)= {\tiny\Yvcentermath1 \yng(2,2,1)}$ its dimension is $3$.
\item $\dim V\geq 4$ , $H^{(3)}\left(\mathfrak{n}^3_2\right)= {\tiny\Yvcentermath1 \yng(2,2,1)\oplus\yng(2,1,1,1)\oplus \yng(3,2,1,1)}$
\end{itemize}
\end{example}

It would be nice to get another stability in the direction of $k$, i.e. ith homologies of nilpotent k-ary Lie algebras while $k$ is increasing. However, except the first homology i.e. $H^1(\gG)=V$, this cannot not true, simply by the result of proposition \ref{prop2}:
\begin{gather}
 \cfrac{\bigwedge^{k-1}V\otimes\bigwedge^{k}V}{\bigwedge^{2k-1}V}= \bigoplus\limits^{k-1}_{j=1}S_{2^j1^{2k-2j-1}}V 
\end{gather}
is always a submodule of the second homology of $\mathfrak{n}^k_2$, and not stabilizing.

\section{Toral Rank conjecture}\label{sectionToral}
Let $\mathfrak{n}$ be nilpotent Lie algebra with center $\mathfrak{z}$. Toral rank conjecture states that:
\begin{align}
\sum\limits^{\dim\mathfrak{n}}_{i=0} H^i\left(\mathfrak{n}\right)\geq 2^{\dim\mathfrak{z}}
\end{align}
We carry it into k-ary setup:
\begin{theorem} Toral rank conjecture  holds for  free 2-step nilpotent k-ary Lie algebras, k-ary Heisenberg Lie algebras, k-ary ACJ Lie algebras and free 3-step nilpotent k-ary Lie algebras   of small dimension.
\end{theorem}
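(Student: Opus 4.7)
My plan is to split the statement along its four families and handle the three $2$-step cases uniformly via the refinement Theorem \ref{thmRefinement}, leaving only the free $3$-step case to be finished by direct summation of the Betti numbers from Section \ref{homfree3}. First I would pin down the center in each case: by inspecting the defining brackets one sees $\mathfrak{z}(\hH) = \langle z\rangle$; for the ACJ algebra $\gG$, $\mathfrak{z}(\gG) = \langle x^k_1,\ldots,x^k_m\rangle$; $\mathfrak{z}(\mathfrak{n}^k_2) = \bigwedge^kV$; and for the free $3$-step algebra with $\dim V = k$ the center is the degree $2k-1$ Hall-basis subspace, of dimension $k$ (because bracketing any such element with $k-1$ further elements lands in the fourth step of the lower central series, which vanishes by $3$-step nilpotency). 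The four target inequalities thus read $\sum\vert H^i\vert\geq 2$, $\geq 2^m$, $\geq 2^{\binom{n}{k}}$, and $\geq 2^k$, respectively.

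For the three $2$-step families the argument is uniform: since Theorem \ref{thmRefinement} is by construction a refinement of the toral rank conjecture for $2$-step nilpotent $k$-ary Lie algebras, it applies to each of $\hH$, the ACJ algebra $\gG$, and $\mathfrak{n}^k_2$ and yields the required bound. The Heisenberg case is in fact already forced by $\vert H^0(\hH)\vert + \vert H^1(\hH)\vert = 1 + km\geq 2$ via Theorem \ref{thmHeis}; the substantive content lies in the ACJ and free $2$-step cases, where $\dim\mathfrak{z}$ grows with $m$ or with $\binom{n}{k}$ and the exponential refinement becomes essential.

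For the free $3$-step case of small dimension I would just sum the Betti numbers given in Section \ref{homfree3}, obtaining
\begin{align*}
\sum_i\vert H^i(\gG)\vert \;\geq\; 1 + k + \Bigl(\binom{2k+1}{k} - (3k+2)\Bigr) + (2k+1)(k-1),
\end{align*}
and reducing $\sum\vert H^i\vert\geq 2^k$ to the elementary estimate $\binom{2k+1}{k}\geq 2^k$. The latter comes from a quick induction via $\binom{2(k+1)+1}{k+1}\geq 2\binom{2k+1}{k}$, with the base case $k=3$ already yielding $\sum\vert H^i\vert = 43\geq 8$ directly from Theorem \ref{thm3}.

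The hard step is really the free $2$-step case. Since $\dim\mathfrak{z}=\binom{n}{k}$ grows polynomially in $n$, the conjecture demands a genuinely exponential lower bound in $\binom{n}{k}$ on the total homology, so one has to verify that the refinement Theorem \ref{thmRefinement} is in a form (analogous to Tirao's sum of binomials adding to $2^{\dim\mathfrak{z}}$) from which $2^{\dim\mathfrak{z}}$ can be read off. The natural route is to use that $\bigwedge^\ast\mathfrak{z}\hookrightarrow\bigwedge^\ast\mathfrak{g}$ is a subcomplex with zero differential in every $2$-step setting, and then argue, via a homotopy on the complementary $V$-factors, that the resulting map $\bigwedge^\ast\mathfrak{z}\to H^\ast(\mathfrak{g})$ is injective; this directly supplies $\sum\vert H^i\vert\geq 2^{\dim\mathfrak{z}}$.
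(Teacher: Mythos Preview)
Your overall architecture matches the paper's proof exactly: invoke Theorem \ref{thmRefinement} (and its Corollary) uniformly for the three $2$-step families, and finish the $3$-step case by summing the Betti numbers from Section \ref{homfree3}. The $3$-step computation is essentially identical; the paper simplifies the sum to $\binom{2k+1}{k}+2k^2-3k-3$ and then uses $\binom{2k+1}{k}>\binom{2k}{k}\geq 2^k$, whereas you bound $\binom{2k+1}{k}\geq 2^k$ by induction---both work. (A minor slip: summing the entries of Theorem \ref{thm3} gives $3+24+14+1=42$, not $43$; the discrepancy is whether you include $H^0$.)

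Where you diverge from the paper is your final paragraph. There is nothing that singles out the free $2$-step case as harder: Theorem \ref{thmRefinement} already carries the factor $2^{\vert\mathfrak{z}\vert}$ explicitly, so the only thing to check---for \emph{all three} $2$-step families simultaneously---is that the coefficient $\sum_{i=0}^{k-1}\bigl|\sum_j(-1)^j\binom{\vert\mathfrak{v}\vert}{kj+i}\bigr|$ is at least $1$. The paper simply records this as the Corollary following Theorem \ref{thmRefinement}, and the proof of the present theorem just cites the refinement and moves on. Your proposed alternative route, arguing that $\bigwedge^*\mathfrak{z}\to H^*(\gG)$ is injective ``via a homotopy on the complementary $V$-factors,'' is not what the paper does and is not justified as stated: injectivity of that map is essentially the toral rank conjecture itself and is not known in general, even for ordinary $2$-step nilpotent Lie algebras. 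Drop that paragraph and your argument is the paper's.
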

\begin{proof}
In theorem \ref{thmRefinement}, we give a refinement of toral rank conjecture for 2-step nilpotent k-ary Lie algebras. Hence the only remaining cases are small dimensional 3-step nilpotent k-ary Lie algebras studied in \ref{homfree3}. The sum of the homologies when $k\geq 4$ is:
\begin{gather}
k+{2k+1\choose k}-(k+1)-(2k+1)+(2k+1)(k-1)=\\
{2k+1\choose k}+2k^2-3k-3
\end{gather}

It is clear that
\begin{align}
{2k+1\choose k}+2k^2-3k-3>{2k\choose k}\geq \cfrac{(2k)^k}{k^k}=2^k
\end{align}
Since the size of the center is $k$, toral rank conjecture holds for $k\geq 4$. If $k=3$, the total homology is $42$ according to the Theorem \ref{thm3} and greater than $8$.
\end{proof}

\subsection{Refinement of Toral rank Conjecture}

We prove the following:
\begin{theorem}\label{thmRefinement}
The total homology of any 2-step nilpotent $k$-ary Lie algebra $\gG$ satisfies

\begin{align}
\vert H^*(\gG)\vert\geq \sum\limits^{k-1}_{i=0}\left| \sum_{j=0}\left(-1\right)^{j}{\vert\mathfrak{v}\vert\choose kj+i}\right|2^{\vert\mathfrak{z}\vert}
\end{align}
where $\mathfrak{z}$ is the center of $\gG$ and $\mathfrak{v}$ is its vector space complement. 
\end{theorem}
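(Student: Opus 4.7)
The plan is to split the complex \ref{defcomplex} into independent subcomplexes adapted to the 2-step structure, bound the total homology of each by the absolute value of its Euler characteristic, and assemble the contributions via the triangle inequality. Throughout, write $\gG=\mathfrak{v}\oplus\mathfrak{z}$ as vector spaces, with $\mathfrak{z}$ the center.

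First, I would observe that in the bigrading $\bigwedge^{n}\gG=\bigoplus_{a+b=n}\bigwedge^{a}\mathfrak{v}\otimes\bigwedge^{b}\mathfrak{z}$, the differential \ref{defdiff} has only one nonzero component, sending $\bigwedge^{a}\mathfrak{v}\otimes\bigwedge^{b}\mathfrak{z}$ into $\bigwedge^{a-k}\mathfrak{v}\otimes\bigwedge^{b+1}\mathfrak{z}$, because any $k$-bracket containing a central entry vanishes and a $k$-bracket of $\mathfrak{v}$-elements lands in $\mathfrak{z}$. The map $(a,b)\mapsto(a-k,b+1)$ preserves the linear invariant $N:=a+kb$, so the full complex decomposes as a direct sum of subcomplexes $C_N$ indexed by $N\ge 0$, with $C_N^{b}:=\bigwedge^{N-kb}\mathfrak{v}\otimes\bigwedge^{b}\mathfrak{z}$ and differential raising $b$ by one.

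Next, I would apply the elementary bound $\dim H^{*}(C_N)\ge|\chi(C_N)|$ for a finite complex, summing over $N$ to get
$$|H^{*}(\gG)|\ge\sum_{N\ge 0}\Bigl|\sum_{b}(-1)^{b}\binom{|\mathfrak{v}|}{N-kb}\binom{|\mathfrak{z}|}{b}\Bigr|.$$
Grouping the terms by the residue $i:=N\bmod k\in\{0,\ldots,k-1\}$, writing $N=kq+i$, and reindexing via $m:=q-b$, the overall factor $(-1)^{q}$ is absorbed by the absolute value and $|\chi(C_N)|$ becomes $\bigl|\sum_{m}(-1)^{m}\binom{|\mathfrak{v}|}{km+i}\binom{|\mathfrak{z}|}{q-m}\bigr|$.

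The final step is a single application of the triangle inequality across $q$, followed by the identity $\sum_{q\ge 0}\binom{|\mathfrak{z}|}{q-m}=2^{|\mathfrak{z}|}$, which is independent of $m$. This yields
$$\sum_{q\ge 0}|\chi(C_{kq+i})|\ge 2^{|\mathfrak{z}|}\Bigl|\sum_{m}(-1)^{m}\binom{|\mathfrak{v}|}{km+i}\Bigr|,$$
and summing over $i=0,\ldots,k-1$ gives the stated bound. The main obstacle, and really the only creative step, is spotting that $a+kb$ is the invariant that decouples the differential into manageable pieces; once the subcomplexes $C_N$ are identified, the Euler-characteristic estimate and the triangle-inequality collapse are essentially forced, because after the reindexing the $\binom{|\mathfrak{z}|}{q-m}$ factor is free of the alternating variable $m$ and can be summed out without cancellation.
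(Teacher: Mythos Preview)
Your proof is correct and follows essentially the same strategy as the paper: exploit the bigrading on $\bigwedge(\mathfrak{v}\oplus\mathfrak{z})$ to split the Chevalley--Eilenberg complex into subcomplexes on which the Euler-characteristic bound $\dim H^{*}\ge|\chi|$ can be applied, then pull out the factor $2^{|\mathfrak{z}|}$. The only difference is granularity: the paper decomposes directly into the $k$ subcomplexes indexed by the residue $a\bmod k$ (graded by $j$ with $a=kj+i$) and applies the Euler bound once per residue class, whereas you decompose more finely by the invariant $N=a+kb$, obtain a stronger intermediate inequality, and then collapse it to the paper's bound via the triangle inequality over $q$; since $a+kb\equiv a\pmod k$, your subcomplexes $C_N$ with $N\equiv i$ simply refine the paper's $i$-th subcomplex, so the two arguments are equivalent and neither needs the extra step the other takes.
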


\begin{proof}
Let $\gG=\mathfrak{z}\oplus\mathfrak{v}$ as vector space, the complex \ref{defcomplex} becomes $\bigwedge\gG=\bigwedge\left(\mathfrak{z}\otimes\mathfrak{v}\right)$. Boundary maps are $\partial:\bigwedge^p\mathfrak{v}\otimes\bigwedge^q\mathfrak{z}\rightarrow
\bigwedge^{p-k}\mathfrak{v}\otimes\bigwedge^{q+1}\mathfrak{z}$. This implies that the complex $(\bigwedge\gG,\partial)$ is direct sum of complexes $(\bigwedge^{kj+i}\mathfrak{v}\otimes\bigwedge \mathfrak{z},\partial)$. 

Homology of any finite complex satisfies:
\begin{gather}
\vert H^*(\gG)\vert\geq \left| \sum_{a=0}\left(-1\right)^{a}\dim\left(\bigwedge^{a}\gG\right)\right|
\end{gather}
Now, we replace $\gG$ by $\mathfrak{v}\oplus\mathfrak{z}$ and simplify the expression since the total homology is direct sum of all homologies in  mod k:
\begin{gather}
\vert H^*(\gG)\vert\geq \sum\limits^{k-1}_{i=0}\left| \sum_{j=0}\left(-1\right)^{j}\dim\left(\bigwedge^{kj+i}\mathfrak{v}\otimes\bigwedge\mathfrak{z}\right)\right|\\
\vert H^*(\gG)\vert\geq \sum\limits^{k-1}_{i=0}\left| \sum_{j=0}\left(-1\right)^{j}\dim\left(\bigwedge^{kj+i}\mathfrak{v}\right)\right|2^{\vert\mathfrak{z}\vert}\\
\vert H^*(\gG)\vert\geq \sum\limits^{k-1}_{i=0}\left| \sum_{j=0}\left(-1\right)^{j}{\vert\mathfrak{v}\vert\choose kj+i}\right|2^{\vert\mathfrak{z}\vert}
\end{gather}

\end{proof}

\begin{corollary} The refinement of the toral rank conjecture for 2-step nilpotent k-ary Lie algebras is
\begin{align*}
\vert H^*(\gG)\vert\geq \sum\limits^{k-1}_{i=0}\left| \sum_{j=0}\left(-1\right)^{j}{\vert\mathfrak{v}\vert\choose kj+i}\right|2^{\vert\mathfrak{z}\vert}>2^{\vert\mathfrak{z}\vert}
\end{align*}
When $k=2$, one get explicit formula obtained in \cite{tirao2000refinement}.

\end{corollary}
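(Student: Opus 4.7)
The plan is to exploit the 2-step nilpotency to refine the standard Euler characteristic bound on total homology. First, I would choose a vector space decomposition $\gG = \mathfrak{v} \oplus \mathfrak{z}$ and use the K\"unneth-type identification
\begin{equation*}
\bigwedge \gG \;\cong\; \bigwedge \mathfrak{v} \otimes \bigwedge \mathfrak{z}
\end{equation*}
as a bi-graded vector space, writing a basis element as $\omega_{\mathfrak{v}} \wedge \omega_{\mathfrak{z}}$ with $\omega_{\mathfrak{v}} \in \bigwedge^p \mathfrak{v}$ and $\omega_{\mathfrak{z}} \in \bigwedge^q \mathfrak{z}$. The crucial observation is that because $\gG$ is 2-step nilpotent, any $k$-bracket lands in $\mathfrak{z}$ and any $k$-bracket involving an element of $\mathfrak{z}$ vanishes. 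Inspecting the differential \eqref{defdiff}, only those terms in the sum over $\sigma \in D_{t,k}$ whose chosen $k$-tuple lies entirely in $\mathfrak{v}$ survive, and such a term decreases the $\mathfrak{v}$-degree by $k$ while increasing the $\mathfrak{z}$-degree by $1$. Thus $\partial$ restricts to maps
\begin{equation*}
\partial:\bigwedge^{p}\mathfrak{v}\otimes\bigwedge^{q}\mathfrak{z}\longrightarrow \bigwedge^{p-k}\mathfrak{v}\otimes\bigwedge^{q+1}\mathfrak{z}.
\end{equation*}

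The key consequence is that the residue of $p$ modulo $k$ is preserved by $\partial$, so the complex splits as a direct sum $(\bigwedge\gG,\partial) = \bigoplus_{i=0}^{k-1} C^{(i)}_\bullet$, where $C^{(i)}_\bullet$ consists of all pieces with $p \equiv i \pmod k$. For each such subcomplex, the standard inequality
\begin{equation*}
\dim H^*(C^{(i)}) \;\geq\; \bigl|\chi(C^{(i)})\bigr|
\end{equation*}
(total rank bounds absolute Euler characteristic) yields, after computing $\dim(\bigwedge^{kj+i}\mathfrak{v}\otimes\bigwedge\mathfrak{z}) = \binom{|\mathfrak{v}|}{kj+i}\,2^{|\mathfrak{z}|}$ and pulling the common factor $2^{|\mathfrak{z}|}$ out of the alternating sum in $j$, exactly the bound
\begin{equation*}
\dim H^*(C^{(i)}) \;\geq\; \Bigl|\sum_{j\geq 0}(-1)^{j}\binom{|\mathfrak{v}|}{kj+i}\Bigr|\, 2^{|\mathfrak{z}|}.
\end{equation*}
Summing over $i = 0,\ldots,k-1$ gives the stated refinement.

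The main obstacle, which is conceptual rather than computational, is justifying carefully that the differential really is purely bi-homogeneous of bi-degree $(-k,+1)$; this uses both (a) the 2-step hypothesis that $[\mathfrak{z},\gG,\ldots,\gG]=0$, to kill the cross terms where the $k$-tuple $x_{\sigma(1)},\ldots,x_{\sigma(k)}$ contains any factor from $\mathfrak{z}$, and (b) the fact that the bracket of $k$ elements of $\mathfrak{v}$ necessarily lies in $\mathfrak{z}\supseteq [\gG,\ldots,\gG]$, so the output of each surviving summand goes cleanly into the $\bigwedge\mathfrak{z}$-tensor-factor with the correct sign. Once this bi-grading is in place, the rest of the argument is formal: split the complex into $k$ subcomplexes according to $p \bmod k$, apply Euler characteristic to each, and sum. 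As a sanity check, when $k=2$ the alternating binomial sums collapse to the expressions in \cite{tirao2000refinement}, recovering the classical refinement.
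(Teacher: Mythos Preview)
Your argument is correct and follows exactly the same route as the paper's proof of Theorem~\ref{thmRefinement}: decompose $\bigwedge\gG$ via $\gG=\mathfrak{v}\oplus\mathfrak{z}$, observe that the 2-step hypothesis forces $\partial$ to have bi-degree $(-k,+1)$, split the complex by the residue of the $\mathfrak{v}$-degree modulo $k$, and apply the Euler-characteristic lower bound to each piece. Your write-up in fact supplies more justification than the paper for why $\partial$ is bi-homogeneous. One small omission: you do not argue the strict inequality $\sum_{i}|\sum_j(-1)^j\binom{|\mathfrak{v}|}{kj+i}|>1$, but the paper does not prove this either---it is simply asserted in the corollary.
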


\begin{remark}When we apply this to 5-ary Heisenberg Lie algebra with $m=1$, complex is 
\begin{gather}
0\longrightarrow \bigwedge^4\hH\longrightarrow\hH\longrightarrow 0
\end{gather}
and the total homology: $1+5+5=11$. The current 5-ary Lie algebra $\hH\otimes\cfrac{\Cc[t]}{t^2}$ is 2-step nilpotent, and its dimension is $12$. The lower bound of the total homology of any $12$ dimensional 2-step nilpotent 5-ary Lie algebra is $2900$ by the formula we obtained in the Theorem \ref{thmRefinement}.
Property M \ref{defPropertyM} asserts that
\begin{align}
\left|H\left(\hH\otimes \faktor{\Cc[t]}{t^j}\right)\right|=\left|H\left(\hH\right)\right|^{j}
\end{align}
However, at $j=2$, claimed total homology is $11^2=121<2900$
 which is impossible.\\
This suggests that for k-ary Lie algebras, exponent of the total homology might be $(k-1)j$ in \ref{defPropertyM}.
\end{remark}
Here we add comparison of the lower bounds according to Theorem \ref{thmRefinement} and exponents of $2$. The entries of the third column is due to the formula in \cite{tirao2000refinement}.

$\left[ \begin {array}{ccccccccc} \hline
n&k=2&\log_2&k=3&\log_2&k=4&\log_2&k=5&\log_2\\\hline
1&2& 1.0&2& 1.0&2& 1.0&2& 1.0\\\noalign{\medskip}2&2& 1.0&4& 2.0&4& 2.0&4& 2.0\\\noalign{\medskip}3
&4& 2.0&6& 2.584962500&8& 3.0&8& 3.0\\\noalign{\medskip}4&4& 2.0&12&
 3.584962501&14& 3.807354922&16& 4.0\\\noalign{\medskip}5&8& 3.0&18&
 4.169925001&28& 4.807354922&30& 4.906890596\\\noalign{\medskip}6&8&
 3.0&36& 5.169925000&48& 5.584962501&60& 5.906890595
\\\noalign{\medskip}7&16& 4.0&54& 5.754887502&96& 6.584962500&110&
 6.781359713\\\noalign{\medskip}8&16& 4.0&108& 6.754887502&164&
 7.357552004&220& 7.781359713\\\noalign{\medskip}9&32& 5.0&162&
 7.339850002&328& 8.357552004&400& 8.643856190\\\noalign{\medskip}10&
32& 5.0&324& 8.339850002&560& 9.129283017&800& 9.643856190
\\\noalign{\medskip}11&64& 6.0&486& 8.924812503&1120& 10.12928302&1450
& 10.50183718\\\noalign{\medskip}12&64& 6.0&972& 9.924812502&1912&
 10.90086681&2900& 11.50183718\\\noalign{\medskip}13&128& 7.0&1458&
 10.50977500&3824& 11.90086681&5250& 12.35810171\\\noalign{\medskip}14
&128& 7.0&2916& 11.50977500&6528& 12.67242534&10500& 13.35810171
\\\noalign{\medskip}15&256& 8.0&4374& 12.09473750&13056& 13.67242534&
19000& 14.21371180\\\noalign{\medskip}16&256& 8.0&8748& 13.09473750&
22288& 14.44397955&38000& 15.21371180\\\noalign{\medskip}17&512& 9.0&
13122& 13.67970001&44576& 15.44397955&68750& 16.06907210
\\\noalign{\medskip}18&512& 9.0&26244& 14.67970000&76096& 16.21553300&
137500& 17.06907210\\\noalign{\medskip}19&1024& 10.0&39366&
 15.26466251&152192& 17.21553300&248750& 17.92433701
\\\noalign{\medskip}20&1024& 10.0&78732& 16.26466251&259808&
 17.98708633&497500& 18.92433701\end {array} \right]$

\bibliographystyle{alpha}

\begin{thebibliography}{99}

\bibitem[ACJ97]{armstrong1997explicit}
Grant Armstrong, Grant Cairns, and Barry Jessup.
\newblock Explicit betti numbers for a family of nilpotent lie algebras.
\newblock {\em Proceedings of the American Mathematical Society},
  125(2):381--385, 1997.

\bibitem[AS96]{armstrong1996cohomology}
Grant~F Armstrong and Stefan Sigg.
\newblock On the cohomology of a class of nilpotent lie algebras.
\newblock {\em Bulletin of the Australian Mathematical Society},
  54(3):517--527, 1996.

\bibitem[CF13]{church2013representation}
Thomas Church and Benson Farb.
\newblock Representation theory and homological stability.
\newblock {\em Advances in Mathematics}, 245:250--314, 2013.

\bibitem[FHSW17]{friedmann2017generalization}
Tamar Friedmann, Phil Hanlon, Richard~P Stanley, and Michelle~L Wachs.
\newblock On a generalization of lie ($ k $): a catalanke theorem.
\newblock {\em arXiv preprint arXiv:1710.00376}, 2017.

\bibitem[Fil]{filippov26n}
VT~Filippov.
\newblock n-lie algebras 1985 sib.
\newblock {\em Mat. Zh}, 26:126.

\bibitem[Get99]{getzler1999homology}
Ezra Getzler.
\newblock The homology groups of some two-step nilpotent lie algebras
  associated to symplectic vector spaces.
\newblock {\em arXiv preprint math/9903147}, 1999.

\bibitem[Han90]{hanlon}
Phil Hanlon.
\newblock Some conjectures and results concerning the homology of nilpotent
  {L}ie algebras.
\newblock {\em Adv. Math.}, 84(1):91--134, 1990.

\bibitem[HW95]{hanlon1995lie}
Phil Hanlon and Michelle Wachs.
\newblock On lie k-algebras.
\newblock {\em Advances in Mathematics}, 113(2):206--236, 1995.

\bibitem[HW02]{hanlon2002property}
Phil Hanlon and Michelle~L Wachs.
\newblock On the property m conjecture for the heisenberg lie algebra.
\newblock {\em Journal of Combinatorial Series A}, 99(2):219--231, 2002.

\bibitem[JW88]{jozefiak1988representation}
Tadeusz J{\'o}zefiak and Jerzy Weyman.
\newblock Representation-theoretic interpretation of a formula of de
  littlewood.
\newblock In {\em Mathematical Proceedings of the Cambridge Philosophical
  Society}, volume 103, pages 193--196. Cambridge University Press, 1988.

\bibitem[Kly74]{klyachko1974lie}
Aleksander~A Klyachko.
\newblock Lie elements in the tensor algebra.
\newblock {\em Siberian Mathematical Journal}, 15(6):914--920, 1974.

\bibitem[Kos63]{kostant1963lie}
Bertram Kostant.
\newblock Lie algebra cohomology and generalized schubert cells.
\newblock {\em Annals of Mathematics}, pages 72--144, 1963.

\bibitem[Lan12]{landsberg}
J.~M. Landsberg.
\newblock {\em Tensors: geometry and applications}, volume 128 of {\em Graduate
  Studies in Mathematics}.
\newblock American Mathematical Society, Providence, RI, 2012.

\bibitem[Sam13]{sam2013homology}
Steven~V Sam.
\newblock Homology of analogues of heisenberg lie algebras.
\newblock {\em arXiv preprint arXiv:1307.1901}, 2013.

\bibitem[San83]{santrab}
L.~J. Santharoubane.
\newblock Cohomology of heisenberg lie algebras.
\newblock {\em Proceedings of the American Mathematical Society}, 87(1):23--28,
  1983.

\bibitem[Sig96]{sigg1996laplacian}
Stefan Sigg.
\newblock Laplacian and homology of free two-step nilpotent lie algebras.
\newblock {\em Journal of Algebra}, 185(1):144--161, 1996.

\bibitem[Tir00]{tirao2000refinement}
Paulo Tirao.
\newblock A refinement of the toral rank conjecture for 2-step nilpotent lie
  algebras.
\newblock {\em Proceedings of the American Mathematical Society},
  128(10):2875--2878, 2000.

\bibitem[Tir02]{tirao2002homology}
Paulo Tirao.
\newblock On the homology of free nilpotent lie algebras.
\newblock {\em Journal of Lie Theory}, 12(2):309--323, 2002.

\end{thebibliography}

\end{document}